\title{Coisotropic Hypersurfaces in Grassmannians}
\author{Kathlén Kohn\\Institute of Mathematics,\\ TU Berlin, Germany\\
\rm{kohn@math.tu-berlin.de}}
\date{}
\theoremstyle{definition}
\newtheorem{defn}{Definition}
\newtheorem{ex}[defn]{Example}
\theoremstyle{plain}
\newtheorem{thm}[defn]{Theorem}
\newtheorem{lem}[defn]{Lemma}
\newtheorem{prop}[defn]{Proposition}
\newtheorem{cor}[defn]{Corollary}
\theoremstyle{remark}
\newtheorem{rem}[defn]{Remark}
\newcommand{\bs}[1]{\boldsymbol{#1}}
\newcommand{\proj}{\mathrm{proj}}
\newcommand{\cone}{\mathrm{cone}}
\newcommand{\tr}{\mathrm{tr}}
\newcommand{\rs}{\mathrm{rs}}
\newcommand{\pl}{\mathrm{pl}}
\newcommand{\Hom}{\mathrm{Hom}}
\newcommand{\im}{\mathrm{im}\,}
\newcommand{\Gr}{\mathrm{Gr}}
\newcommand{\CH}{\mathrm{CH}}
\newcommand{\PP}{\mathbb{P}}
\newcommand{\codim}{\mathrm{codim}\,}
\begin{document}
\maketitle

\begin{abstract}
	To every projective variety $X$, we associate a list of hypersurfaces in different Grassmannians, called the coisotropic hypersurfaces of $X$. These include the Chow form and the Hurwitz form of $X$. Gel'fand, Kapranov and Zelevinsky characterized coisotropic hypersurfaces by a rank one condition on tangent spaces. We present a new and simplified proof of that result. We show that the coisotropic hypersurfaces of $X$ equal those of its projectively dual variety, and that their degrees are the polar degrees of $X$. Coisotropic hypersurfaces of Segre varieties are defined by hyperdeterminants, and all hyperdeterminants arise in that manner.  We generalize Cayley's differential characterization of coisotropy and derive new equations for the Cayley variety which parametrizes all coisotropic hypersurfaces of given degree in a fixed Grassmannian. We provide a \texttt{Macaulay2} package for transitioning between $X$ and its coisotropic hypersurfaces.
	\\ \hspace*{2mm} \\
{\bf Keywords:} Chow form, hyperdeterminant, polar degree, associated hypersurface, Grassmannian, Macaulay2
\end{abstract}

\section{Chow Forms, Hurwitz Forms and Beyond}

Historically one of the main motivations for the introduction of Chow forms was the parametrization of all subvarieties of $\mathbb{P}^n$ with fixed dimension and degree.
The parametrization of 0-dimensional varieties is trivial, hypersurfaces can be parametrized by their defining equations, and linear subspaces are points in their respective Grassmannian.
Hence, the first non-trivial case are curves in 3-space of degree (at least) two.
Chow forms of space curves were first introduced by Cayley \cite{cayley2}.
The generalization to arbitrary varieties was given by Chow and van der Waerden \cite{chow}.
For a given variety $X \subseteq \mathbb{P}^n$ of dimension $k$, projective subspaces of dimension $n-k-1$ have typically no intersection with the variety, but those subspaces that do intersect $X$ form a hypersurface in the corresponding Grassmannian.
The \emph{Chow form} of $X$ is the defining polynomial of this hypersurface, which is a unique (up to scaling with constants) polynomial in the coordinate ring of the Grassmannian.
It has the same degree as $X$ and determines $X$ uniquely.

Consider now the same given variety $X$, but projective subspaces of dimension $n-k$. These are expected to have a finite number of intersection points with $X$, given by the degree $d$ of $X$. The subspaces which do not intersect $X$ in $d$ reduced points form again a hypersurface in the corresponding Grassmannian, as long as $d \geq 2$. The defining polynomial of this hypersurface is called \emph{Hurwitz form} of $X$~\cite{hurwitz}.
Equivalently, this hypersurface is the Zariski closure of the set of all $(n-k)$-subspaces that intersect $X$ non-transversely at some smooth point.
In this sense, it plays a crucial role in the study of the condition of intersecting a projective variety with varying linear subspaces \cite{condition}.
Moreover, Chow forms of curves and Hurwitz forms of surfaces in 3-space define exactly the self-dual hypersurfaces in the Grassmannian of lines in $\mathbb{P}^3$ \cite{catanese}.

A natural generalization of the above described hypersurfaces is studied in Chapters~3.3 and~4.3 of \cite{gkz}: 
the \emph{$i$-th higher associated hypersurface} of a variety~$X \subseteq \PP^n$ of dimension $k$ is defined as the Zariski closure of the set of all $(n-k-1+i)$-dimensional subspaces that intersect $X$ at a smooth point non-transversely.
This article is devoted to the study of these hypersurfaces.
Our main contributions are the following.
We state for which index $i$ the above definition yields indeed a hypersurface (Cor.~\ref{cor:dim}), and we show that the degrees of these hypersurfaces are the polar degrees of $X$ (Thm.~\ref{thm:degrees}).
We give a new proof for the main result in \cite{gkz} about these hypersurfaces:
the higher associated hypersurfaces are exactly the \emph{coisotropic hypersurfaces}~\cite[Ch.~4, Thm.~3.14]{gkz}, which are hypersurfaces in Grassmannians whose normal spaces at smooth points are spanned by homomorphisms of rank one.
Note that the latter definition is independent of any underlying projective variety.
The proof in \cite{gkz} uses machinery from Lagrangian varieties to prove the existence of the projective variety $X$ a given coisotropic hypersurface is associated to, whereas our proof (Thm.~\ref{thm:coisotropy_equiv}) is more direct and defines the underlying variety $X$ explicitly.
Due to this result, we will use a more meaningful name for our main objects of study: 
in the following, the $i$-th higher associated hypersurface of $X$ is called the \emph{$i$-th coisotropic hypersurface} of $X$.
Furthermore, we derive that the coisotropic hypersurfaces of the projectively dual variety~$X^\vee$ are the coisotropic hypersurfaces of $X$ in reversed order (Thm.~\ref{thm:duality}).
Finally, we generalize Cayley's~\cite{cayley2} differential characterization of coisotropy (Thm.~\ref{thm:plueckerChar}), and present a \texttt{Macaulay2}~\cite{m2} package for computations with coisotropic hypersurfaces (Sec.~\ref{sec:computations}).

Coisotropic hypersurfaces have many applications in well-studied areas as well as recent research.
As we will see in Section~\ref{sec:degree}, coisotropic hypersurfaces are the analogue of polar varieties in Grassmannians.
Hyperdeterminants are special cases of coisotropic hypersurfaces (see Sec.~\ref{sec:hyperdet}).
Moreover, coisotropic hypersurfaces and their (iterated) singular loci appear naturally in algebraic vision~\cite{vision}.
The unitary group acts transitively on the tangent spaces of a coisotropic hypersurface, which allows to compute volumes of coisotropic hypersurfaces with the kinematic formula~\cite{schubert}.
This is essential for the probabilistic Schubert calculus proposed in~\cite{schubert}.

Section~\ref{sec:dim} shows that the coisotropic hypersurfaces of a projective variety are essentially projectively dual to the Segre product of the variety with projective spaces. With this, we determine for which indices $i$ the $i$-th coisotropic hypersurface has indeed codimension one in its ambient Grassmannian.
We prove in Section~\ref{sec:degree} that the degrees of the coisotropic hypersurfaces coincide with the polar degrees. Section~\ref{sec:rank1} revisits the main theorem of \cite{gkz} about coisotropic hypersurfaces.
The coisotropic hypersurfaces of a variety and its projectively dual will be related in Section~\ref{sec:dual}.
We discuss hyperdeterminants in Section~\ref{sec:hyperdet}.
The Cayley variety~-- formed by all coisotropic hypersurfaces of fixed degree in a fixed Grassmannian~-- as well as characterizations for coisotropy in Pl\"ucker coordinates are studied in Section~\ref{sec:pluecker}. Finally, we shortly describe our \texttt{Macaulay2} package in Section~\ref{sec:computations}.
As a preparation, naming conventions for different coordinate systems of Grassmannians and some further basic notions will be introduced in the following section.

\section{Preliminaries}

This article focuses on projective varieties over $\mathbb{C}$.
Let $V$ be an $(n+1)$-dimensional complex vector space, and $\mathbb{P}(V)$ its projectivization.
If $V = \mathbb{C}^{n+1}$, we will simply write $\mathbb{P}^n := \mathbb{P}(\mathbb{C}^{n+1})$.
By $\mathbb{P}(V)^\ast$ we denote the projectivization of the dual space $V^\ast$, which is formed by hyperplanes in $\mathbb{P}(V)$. The two projective spaces $\mathbb{P}(V)^\ast$ and $\mathbb{P}(V)$ can be identified by sending every point $y=(y_0 : \ldots : y_n) \in \mathbb{P}(V)$ to the hyperplane
\begin{align}
\label{eq:identifyHPpoint}
	\left\lbrace y = 0 \right\rbrace := \left\lbrace x \in \mathbb{P}(V) \;\middle\vert\; \sum \limits_{i=0}^n y_i x_i = 0 \right\rbrace \in \mathbb{P}(V)^\ast.
\end{align}

Throughout the rest of this article, let $X \subseteq \mathbb{P}(V)$ be an irreducible non-empty variety.
We denote the set of smooth points of $X$ by $\mathrm{Reg}(X)$, and
the \emph{embedded tangent space} of $X$ at $x \in \mathrm{Reg}(X)$ by 
\begin{align*}
	T_xX := \left\lbrace y \in \mathbb{P}(V) \;\middle\vert\; \forall f \in I(X): \sum \limits_{i=0}^n \frac{\partial f}{\partial X_i}(x) \cdot y_i = 0 \right\rbrace,
\end{align*}
where $I(X) \subseteq \mathbb{C}[X_0, \ldots, X_n]$ denotes the vanishing ideal of $X$.
A hyperplane in $\mathbb{P}(V)$ is called \emph{tangent} to $X$ at a smooth point $x \in X$ if it contains $T_xX$.
The Zariski closure $X^\vee$ in $\mathbb{P}(V)^\ast$ of the set of all hyperplanes that are tangent to $X$ at some smooth point is called the \emph{projectively dual variety} of $X$.
If $X$ is irreducible, so is $X^\vee$ \cite[Ch.~1, Prop.~1.3]{gkz}.
We will make frequent use of the following biduality of projective varieties over $\mathbb{C}$, which is also known as reflexivity.
\begin{thm}[{\cite[Ch.~1, Thm.~1.1]{gkz}}]
	For every projective variety $X \subseteq \mathbb{P}(V)$, we have $(X^\vee)^\vee = X$. More precisely, if $x \in \mathrm{Reg}(X)$ and $H \in \mathrm{Reg}(X^\vee)$, then $H$ is tangent to $X$ at $x$ if and only if $x$~-- regarded as a hyperplane in $\mathbb{P}(V)^\ast$~-- is tangent to $X^\vee$ at $H$. 
\end{thm}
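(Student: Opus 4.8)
The plan is to prove both assertions at once by studying the \emph{conormal variety} and its symmetry under dualization. I would first pass to affine cones: write $\hat X \subseteq V$ for the affine cone over $X$ and introduce
\[
	\widehat{\mathcal N}_X := \overline{\left\{ (x, \xi) \in V \times V^\ast \;\middle\vert\; x \in \mathrm{Reg}(\hat X),\ \xi|_{T_x \hat X} = 0 \right\}} \subseteq V \times V^\ast .
\]
By construction, the cone over $X^\vee$ is exactly the image of $\widehat{\mathcal N}_X$ under the second projection $\pi_2 \colon V \times V^\ast \to V^\ast$, while the cone over $X$ is its image under $\pi_1$. A dimension count at a smooth point (the fibers of $\pi_1$ are the conormal spaces $(T_x\hat X)^\perp$) gives $\dim \widehat{\mathcal N}_X = n+1 = \tfrac12 \dim(V \oplus V^\ast)$.

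The heart of the argument is to show that $\widehat{\mathcal N}_X$ is Lagrangian for the canonical symplectic form $\omega\big((x,\xi),(x',\xi')\big) := \langle \xi, x'\rangle - \langle \xi', x\rangle$ on $V \oplus V^\ast$. Since the dimension is already correct, it suffices to prove isotropy, which I would establish by a direct local computation: parametrize $\hat X$ near a smooth point by $u \mapsto \phi(u)$ and the conormal covectors by $(u,s) \mapsto \xi(u,s)$ subject to $\langle \xi(u,s), \partial_{u_i}\phi(u)\rangle = 0$. Evaluating $\omega$ on pairs of coordinate tangent vectors, the $s$-$s$ and $s$-$u$ brackets vanish immediately, and the $u$-$u$ bracket vanishes because $\langle \partial_{u_i}\xi, \partial_{u_j}\phi\rangle - \langle \partial_{u_j}\xi, \partial_{u_i}\phi\rangle = \langle \xi, \partial_{u_j}\partial_{u_i}\phi - \partial_{u_i}\partial_{u_j}\phi\rangle = 0$ by equality of mixed partials. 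This is the one genuinely computational step.

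With the Lagrangian property in hand, I would exploit the symmetry of the construction. The swap $\tau(x,\xi) = (\xi,x)$ identifies $V \oplus V^\ast$ with $V^\ast \oplus V^{\ast\ast} = V^\ast \oplus V$ and satisfies $\tau^\ast \omega' = -\omega$, so $\tau(\widehat{\mathcal N}_X)$ is again an irreducible conic Lagrangian whose projection to the first factor $V^\ast$ is the cone over $X^\vee$. The key structural lemma I would then prove is that an irreducible conic Lagrangian is \emph{uniquely determined by its projection to one factor}, namely it must be the conormal variety of that image: at a general smooth point $(y,\eta)$ over $y \in \mathrm{Reg}(\hat Y)$ where the projection is smooth, isotropy of the fiber directions $(0,\dot\eta)$ against all tangent vectors $(\dot y, \ast)$ forces $\langle \dot\eta, \dot y\rangle = 0$, i.e.\ $\dot\eta \in (T_y \hat Y)^\perp$, and the Lagrangian dimension count upgrades this containment of fibers to an equality. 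Applying the lemma to both $\tau(\widehat{\mathcal N}_X)$ and $\widehat{\mathcal N}_{X^\vee}$ yields $\tau(\widehat{\mathcal N}_X) = \widehat{\mathcal N}_{X^\vee}$; projecting onto the remaining factor gives $(X^\vee)^\vee = X$, and reading off membership over smooth points on both sides gives the pointwise tangency equivalence.

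The main obstacle I anticipate is the uniqueness lemma for conic Lagrangians rather than the isotropy computation: it requires the projection $\tau(\widehat{\mathcal N}_X) \to \widehat{X^\vee}$ to be smooth over a dense open subset, which is precisely where working over $\mathbb C$ is essential, via generic smoothness --- indeed biduality genuinely fails in positive characteristic. A secondary point requiring care is the passage between the affine-cone picture and the projective statement: I must check that the fibers of the conormal projection over smooth points record exactly the tangency relation used to define $X^\vee$, so that the equality of conormal varieties translates faithfully into the claimed statement about $\mathrm{Reg}(X)$ and $\mathrm{Reg}(X^\vee)$.
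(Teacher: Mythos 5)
The paper itself contains no proof of this statement---it is quoted as background with a citation to GKZ [Ch.~1, Thm.~1.1]---so the only meaningful comparison is with that cited source, whose argument your proposal essentially reproduces: the affine conormal variety is a conic Lagrangian in $V\oplus V^\ast$, the swap $\tau$ respects the symplectic structure up to sign, and an irreducible conic Lagrangian is recovered as the conormal variety of its projection, whence $\tau(\widehat{\mathcal N}_X)=\widehat{\mathcal N}_{X^\vee}$ and both the set-theoretic biduality and the pointwise tangency equivalence follow. Your outline is correct (including the correctly flagged reliance on generic smoothness, which is where characteristic zero enters); the one step to write out more carefully is the uniqueness lemma, where it is the Euler direction $(0,\eta)$---tangent to the fiber precisely because $\Lambda$ is conic in the covector variable---whose isotropy pairing against the image tangent vectors gives $\eta\in(T_y\hat Y)^\perp$ itself, i.e.\ the containment $\Lambda\subseteq\widehat{\mathcal N}_Y$ that your dimension count then upgrades to equality, rather than merely the statement that fiber tangent directions are conormal.
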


The Grassmannian of all projective subspaces in $\mathbb{P}^n$ of dimension $l$ is denoted \linebreak[4] by~\mbox{$\Gr(l,\PP^n)$}.
For $0 \leq i \leq k := \dim(X)$ and a projective subspace $L \subseteq \mathbb{P}^n$ of dimension $n-k+i-1$, the dimension of $L$ intersected with $T_xX$ equals $i-1$ for almost all $x \in X$. Those subspaces that have a larger intersection with some $T_xX$ (for $x \in \mathrm{Reg}(X)$) form a subvariety of the respective Grassmannian.
\begin{defn}
\label{defn:higher_associated}
For $i \in \lbrace0, \ldots, k \rbrace$, the \emph{$i$-th coisotropic variety of $X$} is defined as
\begin{align*}
\CH_i(X) := \overline{\left\lbrace L \mid \exists x \in \mathrm{Reg}(X) \cap L : \dim(L \cap T_x X) \geq i \right\rbrace} \subseteq \Gr(n-k+i-1,\PP^n).
\end{align*}
\end{defn}
In \cite{gkz} these varieties are called \emph{higher associated hypersurfaces} (cf. Ch.~3, Sec.~2E and Ch.~4, Sec.~3).
For $i=0$, the above definition reduces to the \emph{Chow hypersurface}
\begin{align*}
	\CH_0(X) = \left\lbrace L \in \Gr(n-k-1,\PP^n) \mid X \cap L \neq \emptyset \right\rbrace.
\end{align*}
We will see in Corollary~\ref{cor:dim} that $\CH_0(X)$ is indeed a hypersurface in $\Gr(n-k-1,\PP^n)$ for every $X$.
Thus it is defined by one polynomial in the coordinate ring of $\Gr(n-k-1,\PP^n)$, which is unique up to a constant factor \cite[Ch.~3, Prop.~2.1]{gkz}, known as the \emph{Chow form} of $X$.
The case $i=1$ is studied in~\cite{hurwitz}.
The variety $\CH_1(X)$ is a hypersurface if and only if $\deg(X) \geq 2$, and its corresponding polynomial in the coordinate ring of $\Gr(n-k,\PP^n)$ is called the \emph{Hurwitz form} of~$X$.
For all $i$, the condition $\dim(L \cap T_x X) \geq i$ is equivalent to $\dim(L + T_x X) \leq n-1$, meaning that $L$ intersects $X$ at $x$ non-transversely.

\subsection{Coordinate Systems}

Different coordinates on Grassmannians are discussed in \cite[Ch.~3, Sec.~1]{gkz}. We follow the conventions used in~\cite{hurwitz}. There are six different ways of specifying a point $L \in \Gr(l,\PP^n)$, which we call \emph{primal/dual Pl\"ucker/Stiefel/affine coordinates}.

First, let $A \in \mathbb{C}^{(n-l) \times (n+1)}$ be such that $L$ is the projectivization of the kernel of $A$. The entries of $A$ are the \emph{primal Stiefel coordinates} of $L$, and the maximal minors $p_{i_1 \ldots i_{n-l}}$ of $A$ are the \emph{primal Pl\"ucker coordinates} of $L$. Up to scaling, the Pl\"ucker coordinates are unique, whereas the Stiefel coordinates are clearly not:
multiplying $A$ with any invertible $(n-l) \times (n-l)$ matrix does not change its kernel.
Hence, when denoting by $S(n-l,n+1)$ the \emph{Stiefel variety} of all complex $(n-l) \times (n+1)$ matrices of full rank, the Grassmannian $\Gr(l,\PP^n)$ can be seen as the quotient $S(n-l,n+1) / GL(n-l)$.

Pick now a maximal linearly independent subset of columns of $A$, indexed \linebreak[4] by $\lbrace i_1,\ldots, i_{n-l}\rbrace$, and multiply the inverse of this submatrix by $A$ itself. The resulting matrix has the same kernel as $A$ and its columns indexed by $\lbrace i_1,\ldots, i_{n-l}\rbrace$ form the identity matrix. The remaining entries of this new matrix are the \emph{primal affine coordinates} of $L$. These give a unique representation of $L$ in the \emph{primal affine chart} 
\begin{align*}
U_{i_1 \ldots i_{n-l}} := \left\lbrace A \in \mathbb{C}^{(n-l) \times (n+1)} \;\middle\vert\; 
\begin{array}{l}
i_j\text{-th column of } A \text{ is standard basis vector }e_j \\ (\text{for } j=1, \ldots, n-l)
\end{array}
\right\rbrace
\end{align*}
of the Grassmannian $\Gr(l,\PP^n)$.

Secondly, let $B \in \mathbb{C}^{(l+1) \times (n+1)}$ such that $L$ is the projectivization of the row space of~$B$. The entries of $B$ are the \emph{dual Stiefel coordinates} of $L$, and the maximal minors $q_{j_0 \ldots j_{l}}$ of~$B$ are the \emph{dual Pl\"ucker coordinates} of $L$. As above, for every maximal linearly independent subset of columns of $B$, indexed by $\lbrace j_0,\ldots, j_{l}\rbrace$, the subspace $L$ has unique \emph{dual affine coordinates} in the \emph{dual affine chart} $U_{j_0 \ldots j_{l}}$
 of the Grassmannian $\Gr(l,\PP^n)$.
 
We use the identification $\Gr(n-l-1,(\PP^n)^\ast) = \Gr(l,\PP^n)$ given by the canonical isomorphism of both Grassmannians.
Moreover, we fix the non-canonical isomorphism between $\Gr(l,\PP^n)$ and $\Gr(n-l-1,\PP^n)$ that sends a linear subspace $L \in \Gr(l,\PP^n)$ to its orthogonal complement $L^\perp \in \Gr(n-l-1,\PP^n)$ with respect to the non-degenerate bilinear form $(x,y) \mapsto \sum x_i y_i$ (and \emph{not} with respect to the complex scalar product $(x,y) \mapsto \sum x_i \overline{y_i}$).

The primal coordinates of $\Gr(l,\PP^n)$ are the dual coordinates of $\Gr(n-l-1,\PP^n)$. 
If $L \in \Gr(l,\PP^n)$ is the row space of $B := (I_{l+1} | M)$, where $I_k$ denotes the $k$-dimensional identity matrix and $M$ is an $(l+1) \times (n-l)$-matrix, then $L^\perp$ is the row space of $A := (-M^T | I_{n-l})$. Moreover, the kernel of $A$ is $L$ and the kernel of $B$ is $L^\perp$, since we have 
\begin{align*}
	\ker (N) = \rs(N)^\perp
\end{align*}
for any matrix $N$ with row space $\rs(N)$. Thus the isomorphism between $\Gr(l,\PP^n)$ and $\Gr(n-l-1,\PP^n)$ on the same coordinates in both Grassmannians coincides with the map between primal and dual coordinates within the same Grassmannian.
For the Pl\"ucker coordinates it follows that 
\begin{align}
\label{eq:plucker_change}
q_{j_0 \ldots j_{l}} = s(i_1, \ldots, i_{n-l}) \cdot p_{i_1 \ldots i_{n-l}},
\end{align}
where $i_1, \ldots, i_{n-l}$ form the complement of $\lbrace j_0, \ldots, j_{l} \rbrace$ in strictly increasing order, and $s(i_1, \ldots, i_{n-l})$ denotes the sign of the permutation $(i_1, \ldots, i_{n-l},j_0, \ldots, j_{l})$.

\begin{ex}
\label{Ex:runningPlueckerEqations}
Consider the following polynomial in the primal Pl\"ucker coordinates of $\Gr(1,\PP^3)$, which will appear again in Examples~\ref{ex:runningFermatCubic} and~\ref{ex:runningFermatDual}:
\begin{align}
\label{eq:hurwitzFermat}
\begin{split}
&(p_{01}^6+p_{02}^6+p_{03}^6+p_{12}^6+p_{13}^6+p_{23}^6)\\
+2&(p_{10}^3p_{02}^3+p_{10}^3p_{03}^3+p_{20}^3p_{03}^3
+p_{01}^3p_{12}^3+p_{01}^3p_{13}^3+p_{21}^3p_{13}^3)\\
+2&(p_{02}^3p_{21}^3+p_{02}^3p_{23}^3+p_{12}^3p_{23}^3
+p_{03}^3p_{31}^3+p_{03}^3p_{32}^3+p_{13}^3p_{32}^3)\\
+2 &\left( p_{01}p_{23}(p_{03}^2p_{12}^2-p_{02}^2p_{13}^2)-p_{02}p_{13}(p_{01}^2p_{23}^2-p_{03}^2p_{12}^2)+p_{03}p_{12}(p_{02}^2p_{13}^2-p_{01}^2p_{23}^2) \right).
\end{split}
\end{align}
To display the symmetry of the polynomial, the convention $p_{ji}=-p_{ij}$ for $i<j$ is used.
The change of coordinates
\begin{align*}
p_{01} \mapsto q_{23},p_{02} \mapsto -q_{13},
p_{03} \mapsto q_{12},p_{12} \mapsto q_{03},
p_{13} \mapsto -q_{02},p_{23} \mapsto q_{01},
\end{align*}
yields the polynomial in dual Pl\"ucker coordinates:
\begin{align}
\label{eq:hurwitzDualFermat}
\begin{split}
&(q_{01}^6+q_{02}^6+q_{03}^6+q_{12}^6+q_{13}^6+q_{23}^6)\\
-2&(q_{10}^3q_{02}^3+q_{10}^3q_{03}^3+q_{20}^3q_{03}^3
+q_{01}^3q_{12}^3+q_{01}^3q_{13}^3+q_{21}^3q_{13}^3)\\
-2&(q_{02}^3q_{21}^3+q_{02}^3q_{23}^3+q_{12}^3q_{23}^3
+q_{03}^3q_{31}^3+q_{03}^3q_{32}^3+q_{13}^3q_{32}^3)\\
+2 &\left( q_{01}q_{23}(q_{03}^2q_{12}^2-q_{02}^2q_{13}^2)-q_{02}q_{13}(q_{01}^2q_{23}^2-q_{03}^2q_{12}^2)+q_{03}q_{12}(q_{02}^2q_{13}^2-q_{01}^2q_{23}^2) \right).
\end{split}
\end{align}
The polynomial in primal or dual Stiefel coordinates is obtained by substituting the $2 \times 2$-minor given by the columns $i$ and $j$ of a general $2 \times 4$-matrix
$\left( \begin{smallmatrix} 
a_{11} & a_{12} & a_{13} & a_{14} \\ a_{21} & a_{22} & a_{23} & a_{24}
\end{smallmatrix} \right)$ 
into $p_{ij}$ or~$q_{ij}$, respectively. Similarly, one gets the polynomial in primal or dual affine coordinates by using a matrix of the form $\left( \begin{smallmatrix} 
1 & 0 & a_{13} & a_{14} \\ 0 & 1 & a_{23} & a_{24}
\end{smallmatrix} \right)$, i.e., by substituting
$$ \quad\quad\quad
p_{01} \mapsto 1,p_{02} \mapsto a_{23},
p_{03} \mapsto a_{24},p_{12} \mapsto -a_{13},
p_{13} \mapsto -a_{14},p_{23} \mapsto a_{13}a_{24}-a_{23}a_{14}.
\hskip \textwidth minus \textwidth \diamondsuit
$$
\end{ex}

For a subvariety $\Sigma \subseteq \Gr(l,\PP^n)$, we denote by $\Sigma^\ast \subseteq \Gr(n-l-1,\PP^n)$ the image under the non-canonical isomorphism defined above.
In particular, for $l=0$ and $X \subseteq \PP^n$, we have that $X^\ast \subseteq (\PP^n)^\ast$ is the image under the identification described in~\eqref{eq:identifyHPpoint}.

\begin{rem}
To a projective subspace $L \subseteq \PP^n$ we have associated three different subspaces:
its orthogonal complement $L^\perp \subseteq \PP^n$, 
the set $L^\vee \subseteq (\PP^n)^\ast$ of hyperplanes in $\PP^n$ containing $L$,
and the set $L^\ast \subseteq (\PP^n)^\ast$ of hyperplanes in $\PP^n$ parametrized by points in $L$ as in~\eqref{eq:identifyHPpoint}. \hfill $\diamondsuit$
\end{rem}

\section{Cayley Trick}
\label{sec:dim}

The Chow hypersurface can be constructed as the dual of a Segre product, which is known as the \emph{Cayley trick} \cite[Ch.~3, Thm.~2.7]{gkz}.
We generalize this for all coisotropic varieties. Consider the Segre embedding 
\begin{align*}
	\mathbb{P}^{k-i} \times X \hookrightarrow  \mathbb{P}^{k-i} \times \mathbb{P}^n &\hookrightarrow \mathbb{P} \left(\mathbb{C}^{(k-i+1) \times (n+1)} \right), \text{ and} \\
	\left( (\mathbb{P}^{k-i} \times X)^\vee \right)^\ast &\hookrightarrow \mathbb{P} \left(\mathbb{C}^{(k-i+1) \times (n+1)} \right).
\end{align*}
With this, we will show that the projectively dual variety $(\mathbb{P}^{k-i} \times X)^\vee$ equals $\CH_i(X)$, when both varieties are interpreted in the primal matrix space $\mathbb{P} (\mathbb{C}^{(k-i+1) \times (n+1)} )$. Formally, consider the following projection from the Stiefel variety $S(k-i+1,n+1)$ of all complex $(k-i+1) \times (n+1)$-matrices of full rank onto the ambient Grassmannian of $\CH_i(X)$:
\begin{align*}
p: S(k-i+1,n+1) &\longrightarrow \Gr(n-k+i-1,\PP^n), \\
A &\longmapsto \proj(\ker A),
\end{align*}
and take the closure $\overline{p^{-1}(\CH_i(X))}$ in the primal matrix space $\mathbb{P} (\mathbb{C}^{(k-i+1) \times (n+1)} )$.

\begin{prop}
\label{prop:cayley_trick}
The varieties $\overline{p^{-1}(\CH_i(X))}$ and $((\mathbb{P}^{k-i} \times X)^\vee)^\ast$ in the matrix space \linebreak[4] $\mathbb{P} (\mathbb{C}^{(k-i+1) \times (n+1)} )$ are equal.
\end{prop}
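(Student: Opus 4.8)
The plan is to reinterpret both varieties as Zariski closures of explicitly described constructible sets of matrices, and then to check that these descriptions coincide. Throughout I identify the dual matrix space with the primal one via the bilinear form $\langle A,B\rangle=\tr(A^\top B)=\sum_{j,l}A_{jl}B_{jl}$, so that the operation $(\cdot)^\ast$ becomes precisely this identification; under it, $((\PP^{k-i}\times X)^\vee)^\ast$ is simply the closure of the set of matrices $B$ that define a hyperplane tangent to the Segre variety $\PP^{k-i}\times X$ at some smooth point. The heart of the matter is that both the tangency condition and the condition $\proj(\ker A)\in\CH_i(X)$ translate into one and the same pair of linear-algebra constraints on a matrix.

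First I would compute the affine tangent space of the Segre variety at a smooth point $u\,x^\top$, with $u\neq 0$ and $x\in\Reg(X)$. By the product rule it is spanned by the matrices $w\,x^\top$ for $w\in\mathbb{C}^{k-i+1}$ together with the matrices $u\,v^\top$ for $v\in\widehat{T_xX}$, where $\widehat{T_xX}\subseteq V$ denotes the $(k+1)$-dimensional affine cone over the embedded tangent space. Pairing a matrix $B$ against these generators via $\langle B,M\rangle=\tr(B^\top M)$ and simplifying the traces, I would obtain that $B$ is tangent at $u\,x^\top$ exactly when $Bx=0$ and $B^\top u\in(\widehat{T_xX})^\perp$. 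Hence $((\PP^{k-i}\times X)^\vee)^\ast$ is the closure of the set of matrices admitting some $x\in\Reg(X)$ and $u\neq 0$ with these two properties.

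Next I would analyse the left-hand side. For $A$ of full rank, $\proj(\ker A)$ lies in the generating set of $\CH_i(X)$ exactly when there is $x\in\Reg(X)$ with $Ax=0$ and $\dim(\ker A\cap\widehat{T_xX})\ge i+1$. A dimension count rewrites the latter as $\ker A+\widehat{T_xX}\neq V$, i.e. $(\ker A)^\perp\cap(\widehat{T_xX})^\perp\neq 0$; and since $(\ker A)^\perp=\rs(A)=\{A^\top w\}$, this is the existence of some $w\neq 0$ with $A^\top w\in(\widehat{T_xX})^\perp$. Thus, on full-rank matrices, membership of $\proj(\ker A)$ in the generating set of $\CH_i(X)$ is governed by exactly the tangency conditions of the previous step (with $u=w$). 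Because the projection $p$ is a $GL(k-i+1)$-bundle, hence an open morphism, it commutes with taking closures, so $\overline{p^{-1}(\CH_i(X))}$ equals the closure of this full-rank contact locus $W$.

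The remaining, and main, difficulty is to match the two closures, since the dual variety may a priori also contain contact matrices of rank less than $k-i+1$, which never arise as $p(A)$. One inclusion is immediate: $W$ is contained in the closed variety $((\PP^{k-i}\times X)^\vee)^\ast$, hence so is $\overline{W}$. For the reverse I would show that full-rank matrices are dense in the dual variety, which reduces, via the irreducible conormal variety fibred over $\Reg(\PP^{k-i}\times X)$ with linear fibres, to the linear-algebra fact that for fixed generic $x$ and $u$ the contact space $\{B:Bx=0,\ B^\top u\in(\widehat{T_xX})^\perp\}$ contains a matrix of full rank $k-i+1$. This holds because its rows may be chosen inside the hyperplane $x^\perp$ of dimension $n\ge k-i+1$, with only the single $u$-weighted combination of rows constrained to lie in the nonzero subspace $(\widehat{T_xX})^\perp\subseteq x^\perp$, leaving ample freedom for linear independence. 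Consequently $W$ is dense in $((\PP^{k-i}\times X)^\vee)^\ast$, the two closures agree, and the proposition follows.
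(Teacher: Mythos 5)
Your proof is correct and follows essentially the same route as the paper's: compute the affine tangent space of the Segre product $\PP^{k-i}\times X$ at a smooth point, pair a matrix against it, and translate tangency into the two conditions $Bx=0$ and the existence of $u\neq 0$ with $B^\top u\perp \widehat{T_xX}$, which a dimension count identifies with the contact conditions $x\in\ker A$, $\dim(\ker A\cap \widehat{T_xX})\geq i+1$ defining $\CH_i(X)$ on full-rank matrices (the paper phrases this same linear algebra via non-surjectivity of $v\mapsto Av$ rather than via orthogonal complements). The one point where you go beyond the paper is the closing density argument: the paper's proof asserts that verifying the two equivalences suffices, leaving implicit that the full-rank tangent hyperplanes are dense in the irreducible dual variety, whereas you justify this explicitly by exhibiting a full-rank matrix in a generic contact space $\{B: Bx=0,\ B^\top u\in(\widehat{T_xX})^\perp\}$ — a genuine refinement rather than a different method.
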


\begin{proof}
This proof follows the lines of the proof of \cite[Ch.~3, Thm.~2.7]{gkz}.

The variety $((\mathbb{P}^{k-i} \times X)^\vee)^\ast$ is the Zariski closure of the set of all~$A \in \mathbb{P} (\mathbb{C}^{(k-i+1) \times (n+1)} )$ such that the hyperplane 
\begin{align*}
	\lbrace A=0 \rbrace := \left\lbrace M \in \mathbb{P} (\mathbb{C}^{(k-i+1) \times (n+1)}) \;\middle\vert\; \sum_{i,j} a_{ij} m_{ij} = 0 \right\rbrace
\end{align*}
is tangent to $\mathbb{P}^{k-i} \times X$ at some point $(y,x)$ with $x \in \mathrm{Reg}(X)$. Denote by $\bs{x} \in \mathbb{C}^{n+1}$, $\bs{y} \in \mathbb{C}^{k-i+1}$ and $\bs{A} \in \mathbb{C}^{(k-i+1) \times (n+1)}$ affine representatives of $x$, $y$ and $A$, respectively. Moreover, let $\cone(X)$ be the affine cone over $X$. Then we have
\begin{align*}
T_{(\bs{y},\bs{x})} \cone(\mathbb{P}^{k-i} \times X)
&= \lbrace (\bs{y} \otimes v) + (w \otimes \bs{x}) \mid v \in T_{\bs{x}} \cone(X), w \in \mathbb{C}^{k-i+1} \rbrace \\
&= (\bs{y} \otimes T_{\bs{x}} \cone(X)) + (\mathbb{C}^{k-i+1} \otimes \bs{x}) \subseteq \mathbb{C}^{(k-i+1) \times (n+1)}.
\end{align*}

Now consider $\overline{p^{-1}(\CH_i(X))}$, which is the Zariski closure of the set of all full rank $A \in \mathbb{P} (\mathbb{C}^{(k-i+1) \times (n+1)} )$ such that $\bs{x} \in \ker \bs{A}$ and $\dim(\ker(\bs{A}) \cap T_{\bs{x}} \cone(X)) \geq i+1$ for some $x \in \mathrm{Reg}(X)$. Hence, it is enough the show the following two equivalences: 
\begin{align*}
\lbrace \bs{A}=0 \rbrace \supseteq \mathbb{C}^{k-i+1} \otimes \bs{x}
&\Leftrightarrow
\bs{x} \in \ker \bs{A}, \\
\exists \bs{y} \in \mathbb{C}^{k-i+1} \setminus \lbrace 0 \rbrace: \lbrace \bs{A}=0 \rbrace \supseteq \bs{y} \otimes T_{\bs{x}} \cone(X)
&\Leftrightarrow
\dim(\ker(\bs{A}) \cap T_{\bs{x}} \cone(X)) \geq i+1.
\end{align*}

Due to $\ker \bs{A} = \lbrace u \in \mathbb{C}^{n+1} \mid \forall w \in \mathbb{C}^{k-i+1}: \sum_{i,j} a_{ij} w_i u_j =0 \rbrace$, the first equivalence is obvious. Consider the linear map $\psi: T_{\bs{x}} \cone(X) \to \mathbb{C}^{k-i+1}, v \mapsto \bs{A}v$ for the second equivalence. Since $\dim (T_{\bs{x}} \cone(X))=k+1$, the dimension of $ \ker \psi = \ker(\bs{A}) \cap T_{\bs{x}} \cone(X) $ is at least $ i+1$ if and only if $\psi$ is not surjective, which is equivalent to the existence of some non-zero $\bs{y}\in \mathbb{C}^{k-i+1}$ orthogonal to $\im \psi$, i.e., $\sum_{i,j} a_{ij}y_iv_j=0$ for all $v \in T_{\bs{x}} \cone(X)$.
\end{proof}

The construction in Proposition \ref{prop:cayley_trick} will be the main ingredient for many of the following proofs. Furthermore, it shows that the defining polynomials of the coisotropic varieties (in case they are all hypersurfaces) interpolate from the Chow form via the Hurwitz form to the \emph{$X$-discriminant} which is the defining polynomial of $X^\vee$.

This raises immediately the next question: when is the $i$-th coisotropic variety indeed a hypersurface in the Grassmannian.

\begin{cor}
\label{cor:dim}
$\quad \CH_i(X)$ has codimension one in $\Gr(n-k+i-1,\PP^n)$ if and only if $i \leq k - \codim X^\vee +1$.
\end{cor}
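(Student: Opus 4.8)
The plan is to read off the codimension of $\CH_i(X)$ from Proposition~\ref{prop:cayley_trick} and then to compute it by a dimension count on an incidence variety governed by $X^\vee$. First I record what the Cayley trick gives. The projection $p$ is surjective with every fibre a $GL(k-i+1)$-orbit of full rank matrices with fixed kernel, hence of constant dimension $(k-i+1)^2$; since moreover $\Sigma\mapsto\Sigma^\ast$ is an isomorphism of projective spaces, Proposition~\ref{prop:cayley_trick} yields $\codim_{\Gr(n-k+i-1,\PP^n)}\CH_i(X)=\codim(\PP^{k-i}\times X)^\vee$ inside $\PP(\mathbb{C}^{(k-i+1)\times(n+1)})$. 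As the codimension of a dual variety is $1+\mathrm{def}$, this reduces the statement to showing that $\mathrm{def}(\PP^{k-i}\times X)$ vanishes exactly when $i\le k-\codim X^\vee+1$.

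To compute that defect I would argue directly in the Grassmannian. Consider the incidence variety
\[
\mathcal{I}:=\{(x,\ell,L)\mid x\in\Reg(X),\ \ell\in\Reg(X^\vee),\ T_xX\subseteq\{\ell=0\},\ x\in L,\ \ell\in L^\vee\},
\]
whose image in $\Gr(n-k+i-1,\PP^n)$ is dense in $\CH_i(X)$: as already noted in the text, $\dim(L\cap T_xX)\ge i$ is equivalent to $L^\vee\cap (T_xX)^\vee\ne\emptyset$, and a point of this intersection is precisely a hyperplane $\ell\in L^\vee$ tangent to $X$ at $x$. Projecting $\mathcal{I}$ to the conormal variety $\mathrm{Con}(X)=\overline{\{(x,\ell)\mid x\in\Reg(X),\ T_xX\subseteq\{\ell=0\}\}}\subseteq\PP^n\times(\PP^n)^\ast$, which has dimension $n-1$, the fibre over $(x,\ell)$ is $\{L\mid x\in L,\ \ell\in L^\vee\}$, i.e.\ the $(n-k+i-1)$-planes through $x$ inside the hyperplane $\{\ell=0\}\cong\PP^{n-1}$; this fibre has dimension $(n-k+i-1)(k-i)$. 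Hence
\[
\dim\mathcal{I}=(n-1)+(n-k+i-1)(k-i)=\dim\Gr(n-k+i-1,\PP^n)-1,
\]
so $\CH_i(X)$ has codimension one if and only if the other projection $\mathcal{I}\to\CH_i(X)$ is generically finite; its generic fibre over $L$ is exactly $\mathrm{Con}(X)\cap(L\times L^\vee)$, whose dimension equals $\mathrm{def}(\PP^{k-i}\times X)$.

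It remains to decide when $\mathrm{Con}(X)\cap(L\times L^\vee)$ is finite for generic $L\in\CH_i(X)$. Using the projection $\mathrm{Con}(X)\to X^\vee$ whose fibre over a smooth $\ell$ is the contact locus $C_\ell=\{x\mid T_xX\subseteq\{\ell=0\}\}$, biduality identifies $C_\ell$ with the linear space $\{x\mid \{x=0\}\supseteq T_\ell X^\vee\}$, a $\PP^{\codim X^\vee-1}$; reflexivity moreover gives $x\in C_\ell\cap L\Leftrightarrow\{x=0\}\supseteq T_\ell X^\vee+L^\vee$, whence $\dim(C_\ell\cap L)=n-1-\dim(T_\ell X^\vee+L^\vee)$. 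For generic $L\in\CH_i(X)$ and the relevant contact point $\ell$ one finds $\dim(T_\ell X^\vee+L^\vee)=\min\{n-1,\ (n-\codim X^\vee)+(k-i)\}$: when $k-i\ge\codim X^\vee-1$ the span is forced to equal $n-1$ (a simple tangency of $L^\vee$ along $X^\vee$), so $C_\ell\cap L$ is a single reduced point, the fibre is finite, and $\codim\CH_i(X)=1$; when $k-i\le\codim X^\vee-2$ the span equals $(n-\codim X^\vee)+(k-i)$ and $C_\ell\cap L$ has positive dimension $\codim X^\vee-(k-i)-1$, giving $\codim\CH_i(X)=\codim X^\vee-(k-i)>1$. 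Rewriting $k-i\ge\codim X^\vee-1$ as $i\le k-\codim X^\vee+1$ finishes the argument.

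The main obstacle is the genericity bookkeeping that makes these intersection dimensions exact rather than mere inequalities: I must check that for generic $L\in\CH_i(X)$ the plane $L^\vee$ meets $X^\vee$ (respectively its tangent space) in the expected transverse or simply-tangent fashion, that the contact point can be chosen in $\Reg(X)\cap\Reg(X^\vee)$, and that $\mathrm{Con}(X)$ is irreducible of dimension $n-1$. The genuinely delicate point is the off-by-one threshold $\codim X^\vee-1$ instead of $\codim X^\vee$: it appears because $C_\ell$ and $L$ are \emph{not} independent—both are determined by $\ell$ and $L^\vee$ through biduality—so that $C_\ell\cap L$ is already nonempty in the boundary case $k-i=\codim X^\vee-1$, and this linkage must be tracked carefully rather than treating $C_\ell$ and $L$ as general linear subspaces.
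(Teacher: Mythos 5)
Your setup and the ``only if'' half are sound, and they take a genuinely different route from the paper: the incidence variety $\mathcal{I}$ is irreducible of dimension $\dim\Gr(n-k+i-1,\PP^n)-1$, so $\codim \CH_i(X)=1+\delta$ with $\delta$ the generic fibre dimension of $\mathcal{I}\to\CH_i(X)$; and since $\ell\in T_\ell X^\vee\cap L^\vee$ always, every nonempty fibre contains a subset of dimension $n-1-\dim(T_\ell X^\vee+L^\vee)\ge \codim X^\vee-1-(k-i)$, which gives $\codim\CH_i(X)\ge\codim X^\vee-(k-i)\ge 2$ whenever $i>k-\codim X^\vee+1$. The genuine gap is the converse direction, i.e.\ that $\CH_i(X)$ \emph{is} a hypersurface when $k-i\ge\codim X^\vee-1$. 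Your key assertion that for generic $L\in\CH_i(X)$ one has $\dim(T_\ell X^\vee+L^\vee)=\min\{n-1,\,(n-\codim X^\vee)+(k-i)\}$ is stated (``one finds''), not proved, and you yourself flag it as the main obstacle. It cannot come from any general-position principle, because $L$ ranges over the proper subvariety $\CH_i(X)\subsetneq\Gr(n-k+i-1,\PP^n)$, where Kleiman-type transversality is unavailable; indeed excess intersection genuinely occurs on $\CH_i(X)$ precisely when $i>k-\codim X^\vee+1$, so any proof must invoke the hypothesis $k-i\ge\codim X^\vee-1$ in an essential way, and this is exactly the content of the corollary. There is also a second, logical gap: even granting the span claim for each fixed $\ell$, your conclusion ``so \ldots the fibre is finite'' conflates the fibre of $\mathcal{I}\to\CH_i(X)$ over $L$ with its slice at a single $\ell$. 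The full fibre is the union of $(C_\ell\cap L)\times\{\ell\}$ over all $\ell\in L^\vee\cap\Reg(X^\vee)$ with $C_\ell\cap L\neq\emptyset$, and nothing you wrote excludes a positive-dimensional family of such $\ell$, each contributing one contact point; you must bound the set of relevant $\ell$ as well.

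The gap is fillable inside your framework, but it is real work rather than bookkeeping: fix a generic conormal pair $(x_0,\ell_0)$, take $L$ generic among planes with $x_0\in L\subseteq\{\ell_0=0\}$, use $k-i\ge\codim X^\vee-1$ to force $P_{\ell_0}\cap L=\{x_0\}$ (two linear spaces through a common point of $\{\ell_0=0\}$ in general relative position), and then run a separate incidence-dimension count to show that no further conormal pair $(x',\ell')$ is compatible with a generic such $L$~-- a count that needs $X$ to be neither a linear space nor degenerate, with those cases treated separately; one $L$ with nonempty finite fibre then yields $\delta=0$ by semicontinuity of fibre dimension. The paper avoids all of this: after the Cayley trick it simply quotes the product formula $\mu(X\times Y)=\max\{\dim X+\dim Y,\mu(X),\mu(Y)\}$ of \cite[Ch.~1, Thm.~5.5]{gkz} applied to $\PP^{k-i}\times X$, which is precisely the packaged form of the transversality statement you are missing. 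As it stands, your proposal proves one implication and reduces the other to an unproven claim that is equivalent in difficulty to the statement itself.
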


\begin{proof}
For any irreducible variety $X \subset \PP^n$, we define $\mu(X) := \dim X + \codim X^\vee -1$.
As shown in \cite[Ch.~1, Thm~5.5,]{gkz}, we have 
\begin{align}
\label{eq:product_thm}
\mu(X \times Y) = \max \lbrace \dim X + \dim Y, \mu(X), \mu(Y) \rbrace
\end{align}
for the product $X \times Y \hookrightarrow \mathbb{P}^{(n+1)(m+1)-1}$ of two irreducible varieties $X \subseteq \PP^n$ and $Y \subseteq \PP^m$.
Hence, the dual $(X \times Y)^\vee$ is a hypersurface in $(\mathbb{P}^{(n+1)(m+1)-1})^\ast$ if and only if the above maximum equals $\dim X + \dim Y$.
This holds in particular when one of the factors is the variety $\mathbb{P}^{k-i}$ embedded into itself such that $(\mathbb{P}^{k-i})^\vee = \emptyset$ \cite[Ch.~1, Cor.~5.9]{gkz}.
By convention, $\dim (\mathbb{P}^{k-i})^\vee=-1$, so $\mu (\mathbb{P}^{k-i}) = 2(k-i)$. Thus $(\mathbb{P}^{k-i} \times X)^\vee$ is a hypersurface if and only if 
$2k-i \geq  k+\codim X^\vee-1$.
\end{proof}

This motivates the following definition.

\begin{defn}
For $i \in \lbrace 0, \ldots, k-\codim X^\vee+1 \rbrace$, the hypersurface $\CH_i(X)$ is called the \emph{$i$-th coisotropic hypersurface of $X$}. Its defining polynomial in the coordinate ring of $\Gr(n-k+i-1,\PP^n)$, which is unique up to a constant factor, is called the \emph{$i$-th coisotropic form of $X$}.
\end{defn}

\begin{ex}
\label{ex:runningFermatCubic}
Let $X \subseteq \mathbb{P}^3$ be the surface defined by the Fermat cubic \mbox{$x_0^3+x_1^3+x_2^3+x_3^3$}. The projectively dual of $X$ is also a surface. Therefore, the surface $X$ has three coisotropic hypersurfaces. The Chow form of $X$ in dual Pl\"ucker coordinates of $\Gr(0,\PP^3) = \mathbb{P}^3$ is just the Fermat cubic itself. The Hurwitz form of $X$ in primal and dual Pl\"ucker coordinates of $\Gr(1,\PP^3)$ is given by the polynomials in Example~\ref{Ex:runningPlueckerEqations}. This was computed with \texttt{Macaulay2}. Finally, the second coisotropic form of $X$ in primal Pl\"ucker coordinates $p_i$ of $\Gr(2,\PP^3)$, which are the dual coordinates of $\mathbb{P}^3$, is the following polynomial of degree 12, which is also the defining equation of $(X^\vee)^\ast$:
\begin{align*}
6(z_0^4+z_1^4+z_2^4+z_3^4)-8&(z_0^3+z_1^3+z_2^3+z_3^3)
(z_0+z_1+z_2+z_3)\\
+(z_0^2+z_1^2+z_2^2+z_3^2)^2+2&(z_0^2+z_1^2+z_2^2+z_3^2)(z_0+z_1+z_2+z_3)^2-40z_0z_1z_2z_3,
\end{align*}
where $z_i := p_i^3$ for $0 \leq i \leq 3$. \hfill $\diamondsuit$
\end{ex}

\section{Polar Degrees}
\label{sec:degree}

After studying the dimension of the coisotropic hypersurfaces, the next focus will lie on their degrees. In fact, these degrees agree with the well-studied polar degrees \cite{piene, holme}. 
As before, let $X \subseteq \mathbb{P}^n$ be an irreducible variety of dimension $k$.
Moreover, let $0 \leq i \leq k$ and $V \subseteq \mathbb{P}^n$ be a projective subspace of dimension $n-k+i-2$.
For almost all $x \in X$, the dimension of $V$ intersected with $T_xX$ equals $i-2$. 
Define the \emph{$i$-th polar variety of $X$ with respect to $V$} as
\begin{align*}
P_i(X,V) := \overline{\left\lbrace x \in \mathrm{Reg}(X) \mid \dim(V \cap T_x X) \geq i-1 \right\rbrace} \subseteq X.
\end{align*}
Given a general $X$, the $i$-th polar variety has codimension $i$ in $X$ for almost all choices of $V$. Furthermore, for any $X$ there exists an integer $\delta_i(X)$ that is equal to the degree of $P_i(X,V)$ for almost all $V$. 
This integer $\delta_i(X)$ is called the \emph{$i$-th polar degree of $X$}.

These degrees satisfy a lot of interesting properties:
\hspace*{-5mm}
\begin{enumerate}
\item $\delta_i(X) > 0$ if and only if $i \leq k - \codim X^\vee +1$.

(Note that this coincides with the range of indices where the coisotropic varieties of~$X$ are hypersurfaces.)
\item $\delta_0(X) = \deg X$.
\item $\delta_{k - \codim X^\vee +1}(X) = \deg X^\vee$.
\item $\delta_i(X) = \delta_{k - \codim X^\vee +1-i}(X^\vee)$.
\item $\delta_i(X \cap H) = \delta_i(X)$ for any $0 \leq i \leq k-1$ and any generic hyperplane $H \subseteq \mathbb{P}^n$.
\item $\delta_i(\pi(X)) = \delta_i(X)$ if $\codim X \geq 2$ and $\pi: \mathbb{P}^n \dashrightarrow \mathbb{P}^{n-1}$ is a general linear projection.
\end{enumerate}

One can also define the polar degrees via the  \emph{conormal variety}
\begin{align*}
	\mathcal{N}_X := \overline{\left\lbrace(x,y) \mid x \in \mathrm{Reg}(X), T_x X \subseteq \lbrace y= 0 \rbrace \right\rbrace} \subseteq \mathbb{P}^n \times \mathbb{P}^n.	
\end{align*}
The \emph{multidegree} of a variety $X$ embedded into a product of projective spaces $\PP^{n_1} \times \ldots \times \PP^{n_d}$ with codimension $c$ is a homogeneous polynomial of degree $c$ whose term $k t_1^{c_1} \ldots t_d^{c_d}$ indicates that the intersection of $X$ with the product $L_1 \times \ldots \times L_d$ of general linear subspaces $L_i \subseteq \PP^{n_i}$ with $\dim(L_i) = c_i$ consists of $k$ points.
Thus, the {multidegree} of $\mathcal{N}_X$ is a homogeneous polynomial of degree $n+1$ in two variables.
The non-zero coefficients of this polynomial are the polar degrees (cf. \cite[Prop. (3) on page 187]{kleiman1} and \cite[Lem. (2.23) on page 169]{kleiman2}).
Using the command \texttt{multidegree} in \texttt{Macaulay2}, this gives a practical way to compute the polar degrees of a given variety $X$.

Now another property will be added to this list, namely that the degree of the $i$-th coisotropic hypersurface of $X$ is the $i$-th polar degree of $X$. 
On first sight, this is remarkable since the coisotropic hypersurfaces are subvarieties of a Grassmannian, whereas the polar varieties are subvarieties of the projective variety $X \subseteq \mathbb{P}^n$. 
The degree of a hypersurface $\Sigma \subseteq \Gr(l,\PP^n)$ is defined as
\begin{align*}
\deg \Sigma := |\lbrace L \in \Sigma \mid N \subseteq L \subseteq M \rbrace|,
\end{align*}
where $N \subseteq M \subseteq \mathbb{P}^n$ is a generic flag of $(l-1)$-dimensional and $(l+1)$-dimensional projective subspaces.
Alternatively, the degree of $\Sigma$ can be defined as the degree of the defining polynomial of $\Sigma$ in the coordinate ring of $\Gr(l,\PP^n)$~\cite[Ch.~3, Prop.~2.1]{gkz}.

\begin{thm}
	\label{thm:degrees}
For $0 \leq i \leq k - \codim X^\vee +1$, the degree of the $i$-th coisotropic hypersurface of $X$ equals the $i$-th polar degree of $X$, i.e.,
\begin{align*}
\deg \CH_i(X) = \delta_i(X).
\end{align*}
\end{thm}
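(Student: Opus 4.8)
The plan is to compute $\deg \CH_i(X)$ directly from the flag definition of the degree of a hypersurface in a Grassmannian and match it, point for point, with the count of points defining $\delta_i(X)$. Fix a generic flag $N \subseteq M \subseteq \PP^n$ with $\dim N = n-k+i-2$ and $\dim M = n-k+i$, and let $\mathcal P := \{L : N \subseteq L \subseteq M\} \cong \PP^1$ be the associated pencil of $(n-k+i-1)$-planes, which is a line in the Pl\"ucker embedding. By definition $\deg \CH_i(X) = \#(\CH_i(X) \cap \mathcal P)$. Since $N \subseteq L \subseteq M$ forces $\dim\langle N, x\rangle = \dim L$ whenever $x \in L \setminus N$, every $L \in \mathcal P$ meeting $X$ at a point $x \notin N$ satisfies $L = \langle N, x\rangle$; thus an intersection point of $\mathcal P$ with $\CH_i(X)$ is recorded by a smooth point $x \in \Reg(X) \cap M$ with $x \in L = \langle N, x\rangle$ and $\dim(L \cap T_x X) \geq i$.

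The heart of the argument is an elementary identity. Because the embedded tangent space always contains its own point, $\bs{x} \in T_{\bs{x}}\cone(X)$, and because $x \notin N$ for generic $N$, a direct computation on affine cones (identifying each projective subspace with its cone in $\mathbb{C}^{n+1}$, so that $\langle N, \bs{x}\rangle = N + \mathbb{C}\bs{x}$) gives
\begin{align*}
\langle N, \bs{x}\rangle \cap T_{\bs{x}}\cone(X) = \bigl(N \cap T_{\bs{x}}\cone(X)\bigr) + \mathbb{C}\bs{x},
\end{align*}
so that $\dim(\langle N, x\rangle \cap T_x X) = \dim(N \cap T_x X) + 1$. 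Consequently the witnessing condition $\dim(\langle N, x\rangle \cap T_x X) \geq i$ is equivalent to $\dim(N \cap T_x X) \geq i-1$, i.e. to $x \in P_i(X, N)$. Together with $x \in M$ this shows that the smooth witness points are exactly the points of $P_i(X, N) \cap M \cap \Reg(X)$. For generic $N$ the polar variety $P_i(X, N)$ has dimension $k-i$ and degree $\delta_i(X)$, so for generic $M$ (of complementary dimension $n-k+i$) the intersection $P_i(X, N) \cap M$ consists of exactly $\delta_i(X)$ smooth points, none lying on $N$.

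It remains to promote this correspondence into the equality $\#(\CH_i(X) \cap \mathcal P) = \delta_i(X)$, and this is where the main difficulty lies. One must show that $x \mapsto \langle N, x\rangle$ is a bijection from the $\delta_i(X)$ witness points onto $\CH_i(X) \cap \mathcal P$, and that $\mathcal P$ meets $\CH_i(X)$ transversely so each intersection point counts once. Surjectivity and transversality follow from genericity of the flag, since a generic line meets the irreducible hypersurface $\CH_i(X)$ transversely at generic points, which carry genuine smooth witnesses. The delicate part is injectivity: two witnesses $x \neq x'$ must not satisfy $\langle N, x\rangle = \langle N, x'\rangle$, equivalently the projection $\pi_N \colon \PP^n \dashrightarrow \PP^{k-i+1}$ from $N$ must be injective on $P_i(X, N) \cap M$. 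The usual ``generic projection is birational onto its image'' principle does not apply verbatim, since the center $N$ is \emph{not} independent of its target $P_i(X, N)$ --- the polar variety is itself defined using $N$. I would resolve this by working with the universal polar variety $\{(x, N) : x \in P_i(X, N)\}$ over the space of centers and invoking a Bertini/generic-smoothness argument to conclude that, for generic $N$, the restriction $\pi_N|_{P_i(X, N)}$ is birational onto its $(k-i)$-dimensional hypersurface image in $\PP^{k-i+1}$; intersecting with the generic line $\pi_N(M)$ then separates the $\delta_i(X)$ preimage points.

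An alternative route avoids this genericity analysis at the cost of a polar-degree computation for a Segre product, using Proposition~\ref{prop:cayley_trick}. Since the primal Pl\"ucker coordinates are the maximal $(k-i+1)$-minors of a Stiefel representative, pulling the defining polynomial of $\CH_i(X)$ back along $p$ multiplies its degree by $k-i+1$, while the trace-pairing identification $(\cdot)^\ast$ preserves degree; hence $\deg \CH_i(X) = \frac{1}{k-i+1}\deg(\PP^{k-i} \times X)^\vee$. By property~(3) of the polar degrees applied to $Y = \PP^{k-i} \times X$, whose dual is a hypersurface exactly in the range of Corollary~\ref{cor:dim}, one has $\deg(\PP^{k-i} \times X)^\vee = \delta_{2k-i}(\PP^{k-i} \times X)$, so the theorem reduces to the product formula $\delta_{2k-i}(\PP^{k-i} \times X) = (k-i+1)\,\delta_i(X)$. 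I expect the genericity/bijection step of the direct approach, rather than this bookkeeping, to be the true obstacle in either presentation.
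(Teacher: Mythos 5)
Your reduction of flag points to polar points is correct: the identity $\dim(\langle N,x\rangle \cap T_xX) = \dim(N \cap T_xX)+1$, and with it the identification of smooth witnesses with $P_i(X,N)\cap M$, is exactly right. In fact your whole first route is the ``alternative geometric argument'' that the paper itself records in the Remark following Theorem~\ref{thm:degrees} (your $\langle N, P_i(X,N)\rangle$ is its cone $S_i(X,V)$). But note that the paper claims that argument only \emph{for general} $X$, and the reason is precisely the step you flag but do not prove: that $x \mapsto \langle N,x\rangle$, i.e.\ the projection $\pi_N$ restricted to $P_i(X,N)$, is injective on the relevant points --- equivalently, that the cone over the polar variety has degree $\delta_i(X)$ rather than a proper fraction of it. Your proposed fix (universal polar variety plus ``Bertini/generic smoothness'') is a plan, not a proof: Bertini-type theorems give irreducibility or smoothness of generic members, not generic injectivity of a projection whose center is entangled with the variety being projected; and if $\pi_N|_{P_i(X,N)}$ were generically $m$-to-one, your count would output $\deg \CH_i(X) = \delta_i(X)/m$. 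So as written the proposal establishes only $\deg\CH_i(X) \leq \delta_i(X)$, and even that modulo a secondary genericity point you gloss: $M$ is constrained to contain $N$, so $\#\left(P_i(X,N)\cap M\right) = \delta_i(X)$ itself needs an argument, since $P_i(X,N)$ is not independent of $N$. Your second route has the same status: the product formula $\delta_{2k-i}(\PP^{k-i}\times X) = (k-i+1)\,\delta_i(X)$ is not bookkeeping; it is essentially equivalent to the theorem and appears nowhere in the paper's list of properties.

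The paper's actual proof sidesteps the injectivity problem entirely, and it is worth seeing how. It first slices: by property (5), $\delta_i(X) = \delta_i(X\cap M)$ for generic $M$ of codimension $k-i$; the slice $X\cap M$ has dimension $i$, so properties (1) and (3) make $(X\cap M)^\vee$ a hypersurface in $M^\ast$ of degree $\delta_i(X\cap M)$. The flag count defining $\deg\CH_i(X)$ --- subspaces $L \in \CH_i(X)$ with $N \subseteq L \subseteq M$ --- is then literally the set of hyperplanes of $M$ containing $N$ and tangent to $X\cap M$, i.e.\ the intersection of the hypersurface $(X\cap M)^\vee$ with the generic line $\lbrace H \in M^\ast \mid N \subseteq H\rbrace$. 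Here one counts tangent \emph{hyperplanes}, not tangency \emph{points}: a hyperplane tangent along several points contributes once, so no injectivity statement is ever needed --- multiple tangency is absorbed rather than excluded. If you want to complete your direct approach for \emph{all} irreducible $X$ (as the theorem requires, not just general $X$), the clean repair is to import exactly this mechanism: after slicing, a witness $x$ of $L$ is the same thing as a tangency point of the hyperplane $L \subseteq M$ on $X \cap M$, and in characteristic $0$ the conormal map $\mathcal{N}_{X\cap M}\to (X\cap M)^\vee$ is birational (by biduality), so a generic such hyperplane has a unique tangency point; that is the injectivity you are missing, but obtaining it this way is really the paper's proof in disguise.
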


\begin{proof}
	Let $0 \leq d \leq k$. For $0 \leq i \leq k-d$ and a generic subspace $M \subseteq \mathbb{P}^n$ of codimension~$d$, we have $\delta_i(X \cap M) = \delta_i(X)$ by applying the fifth property above several times.
	Fix now $0 \leq i \leq  k-\codim X^\vee+1$ and set $d := k-i$. Choose a generic $M$ of codimension $d$ as well as a generic subspace $N \subseteq M$ with $\dim N = \dim M-\dim (X \cap M)+i-2 = n-k+i-2$. Then the $i$-th polar degree of $X$ equals $\deg P_i(X \cap M,N)$. Since $X \cap M$ is $i$-dimensional, it follows from the first and the third property above that the dual $(X \cap M)^\vee$ is a hypersurface in $M^\ast$ with degree $\delta_i(X \cap M)$. To sum up,
	\begin{align*}
		\delta_i(X) = \deg P_i(X \cap M,N) = \delta_i(X \cap M) = \deg (X \cap M)^\vee.
	\end{align*}
	The degree of the hypersurface $(X \cap M)^\vee$ is also the number of hyperplanes in $M$ that are tangent to some smooth point of $X \cap M$ and that contain $N$, but these hyperplanes are exactly the subspaces in $\CH_i(X)$ with $N \subseteq L \subseteq M$.
\end{proof}

\begin{rem}
	For general projective varieties $X$, we can give another geometric argument to show Theorem~\ref{thm:degrees}.
	As above, let $V$ be a generic projective subspace of dimension $n-k+i-2$. Consider the variety $S_i(X,V) \subseteq \mathbb{P}^n$ formed by the union of all lines through $V$ and the $i$-th polar variety $P_i(X,V)$. For general $X$, the $i$-th polar variety has codimension $i$ in $X$ and $S_i(X,V)$ is a hypersurface of degree $\delta_i(X)$.
	The $i$-th coisotropic form of $X$ in dual Stiefel coordinates is a polynomial in the entries of a general $(n-k+i)\times(n+1)$-matrix $B$. Substituting the last rows of that matrix by a basis of $V$ yields a homogeneous polynomial $F \in \mathbb{C}[b_{0j} \mid 0 \leq j \leq n ]$, whose degree is the degree of the $i$-th coisotropic hypersurface of~$X$. This polynomial defines an irreducible hypersurface in $\mathbb{P}^n$, which is in fact $S_i(X,V)$. This shows Theorem~\ref{thm:degrees} for general $X$.
	
	To see that $S_i(X,V)$ and the zero locus of $F$ are the same, it is enough to show that~$F$ vanishes at every point in $S_i(X,V)$.
	This is clear for all points in $V \subseteq S_i(X,V)$. For a point $y \notin V$ on the line between $x \in P_i(X,V)$ and some point in $V$ we have that $F$ vanishes at $y$ if and only if it vanishes at $x$. If $x$ is a smooth point of $X$ such that the dimension of $V \cap T_xX$ is at least $i-1$, then the projective span of $V$ and $x$ is a point in $\CH_i(X)$ and $F$ vanishes at $x$. Since the set of all those $x$ is dense in $P_i(X,V)$, all points in $P_i(X,V)$ are in the zero locus of $F$. \hfill $\diamondsuit$
\end{rem}

\section{Rank One Characterizations}
\label{sec:rank1}

In the following, several equivalent characterizations for coisotropic hypersurfaces will be given. A very fundamental equivalence statement is proven in \cite[Ch.~4, Thm.~3.14]{gkz}. This proof contains many geometric ideas by taking a detour over conormal varieties and Lagrangian varieties in general. Here a new and direct proof will be presented, using only the Cayley trick from Proposition \ref{prop:cayley_trick}.
In~\cite{gkz}, the notion of coisotropy is defined as follows:

\begin{defn}
Let $U$ and $V$ be finite dimensional vector spaces. Define $W := \Hom(U,V)$, and identify $W^\ast$ with $\Hom(V,U)$ via $\Hom(V,U) \ni \phi \mapsto \tr(\cdot \circ \phi)$. A hyperplane $H \subseteq W$ is called \emph{coisotropic} if its defining equation has rank one in $\Hom(V,U)$.
\end{defn}

In coordinates, we choose bases for $U^\ast$ and $V$, and let $E_{ij} \in U^\ast \otimes V = \Hom(U,V)$ be the matrix with exactly one 1-entry at position $(i,j)$ and 0-entries otherwise.
Let $\psi \in W^\ast$ be the equation for the hyperplane $H$.
We consider the bases for $U$ and $V^\ast$ dual to the chosen bases above,
and define $N_\psi \in V^\ast \otimes U = \Hom(V,U)$ as the matrix whose $(j,i)$-th entry equals~$\psi(E_{ij})$.
The coisotropy condition for $H$ means exactly that the rank of $N_\psi$ equals one.
This definition can be extended to hypersurfaces in Grassmannians, since the tangent space of $\Gr(l,\PP^n)$ at $L=\PP(U)$, for an $(l+1)$-dimensional subspace $U \subseteq \mathbb{C}^{n+1}$, is naturally isomorphic to $\Hom(U,\mathbb{C}^{n+1}/U)$.

\begin{defn}
\label{defn:coisotropy}
An irreducible hypersurface $\Sigma$ in the Grassmannian $\Gr(l,\PP^n)$ is called \emph{coisotropic} if, for every smooth $L \in \Sigma$, the tangent hyperplane $T_L \Sigma$ is coisotropic in $T_L \Gr(l,\PP^n)$.
\end{defn}

Let $L \in \Gr(l,\PP^n)$ be given in primal affine coordinates by an $(n-l) \times (l+1)$-matrix $M_L$.
This means that for some fixed $i_1, \ldots, i_{n-l}$, the subspace $L$ is the projectivization of the kernel of an $(n-l) \times (n+1)$-matrix $A$ whose columns indexed by $i_1, \ldots, i_{n-l}$ are standard basis vectors and the remaining columns form $M_L$.
Let $\rho := \rho_{i_1, \ldots, i_{n-l}}$ be the map that sends a given matrix $M_L$ to its corresponding primal Pl\"ucker coordinates, which are the maximal minors of $A$.
Thus, if $Q$ denotes the polynomial in primal Pl\"ucker coordinates that defines $\Sigma$, then $Q \circ \rho$ is the equation for $\Sigma$ in the primal affine chart $U_{i_1, \ldots, i_{n-l}}$. Identifying the tangent hyperplane of $\Sigma$ at $L$ with
\begin{align*}
T_{M_L} V(Q \circ \rho) = \left\lbrace
{M} \in \mathbb{C}^{(n-l) \times (l+1)} \;\middle\vert\; \sum \limits_{i,j} \frac{\partial (Q \circ \rho)}{\partial a_{ij}} (M_L) \cdot {M}_{ij} =0
\right\rbrace
\end{align*}
yields that $T_L \Sigma$ is coisotropic if and only if the rank of the matrix $J_{Q \circ \rho}(M_L)$ with entries $\frac{\partial (Q \circ \rho)}{\partial a_{ij}} (M_L)$ is one.

\begin{defn}
For a polynomial $F \in \mathbb{C}[a_{ij} \,|\, 1 \leq i \leq l, 1 \leq j \leq m]$, let $J_F$ be the $(l \times m)$-matrix of all first-order partial derivatives of $F$, i.e., the $(i,j)$-th entry of $J_F$ is~$\frac{\partial F}{\partial a_{ij}}$.
\end{defn}

Theorem 3.14 in Chapter 4 of \cite{gkz} states that the two notions of coisotropy given in Definitions \ref{defn:higher_associated} and \ref{defn:coisotropy} are in fact the same:

\begin{thm}
\label{thm:coisotropy_equiv}
\hspace*{3mm}
\begin{enumerate}
\item For an irreducible variety $X \subseteq \mathbb{P}^n$ and $i \in \lbrace 0, \ldots, \dim X-\codim X^\vee+1 \rbrace$, the $i$-th coisotropic hypersurface of $X$ is coisotropic (in the sense of Definition~\ref{defn:coisotropy}).
\item For an irreducible coisotropic hypersurface $\Sigma \subseteq \Gr(l,\PP^n)$, there is an irreducible variety $X \subseteq \mathbb{P}^n$ such that $\Sigma = \CH_{\dim X+l+1-n}(X)$.
\end{enumerate}
\end{thm}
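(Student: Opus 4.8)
The plan is to treat both directions through the Cayley trick (Proposition~\ref{prop:cayley_trick}) and reflexivity, after first pinning down how the rank-one condition looks in Stiefel coordinates. Write a point of an irreducible hypersurface $\Sigma\subseteq\Gr(l,\PP^n)$ as $L=\PP(\ker A)$ with $A=(I_{n-l}\mid M_L)$ in the primal affine chart, and let $\tilde Q$ be the reduced defining polynomial of the matrix hypersurface $\Sigma^{\mathrm{St}}:=\overline{p^{-1}(\Sigma)}\subseteq\PP(\mathbb{C}^{(n-l)\times(n+1)})$, so that the restriction of $\tilde Q$ to the chart agrees up to a constant with $Q\circ\rho$. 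Since $\Sigma^{\mathrm{St}}$ is stable under the $GL(n-l)$-action $A\mapsto gA$, its defining polynomial is semi-invariant, $\tilde Q(gA)=\det(g)^{m}\tilde Q(A)$ for some integer $m\geq1$; differentiating at $g=I$ gives the Euler-type identity $J_{\tilde Q}(A)\,A^{T}=m\,\tilde Q(A)\,I_{n-l}$, which vanishes on $\Sigma^{\mathrm{St}}$. Splitting $J_{\tilde Q}(A)$ into pivot and free columns and using $A^{T}=\bigl(\begin{smallmatrix}I_{n-l}\\ M_L^{T}\end{smallmatrix}\bigr)$ then yields $J_{\tilde Q}(A)=J_{Q\circ\rho}(M_L)\,(-M_L^{T}\mid I_{l+1})$. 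As the right factor has full row rank, the Stiefel gradient $J_{\tilde Q}(A)$ and the affine Jacobian $J_{Q\circ\rho}(M_L)$ have equal rank, so coisotropy of $T_L\Sigma$ is equivalent to $J_{\tilde Q}(A)$ having rank one; moreover the identity above forces the rows of $J_{\tilde Q}(A)$ into $\ker A$, so in the rank-one case $J_{\tilde Q}(A)=\bs y\otimes\bs x$ with $[\bs x]\in L$.

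For statement~(1), fix $i$ in the admissible range and set $l:=n-k+i-1$. The Cayley trick identifies $\Sigma^{\mathrm{St}}$ with $((\PP^{k-i}\times X)^{\vee})^{\ast}$, so $\tilde Q$ is, up to the $\ast$-identification, the defining polynomial of the dual of a Segre product. By reflexivity the gradient of the defining equation of a dual hypersurface at a smooth point recovers the point of tangency on $\PP^{k-i}\times X$; as the affine cone over a Segre product consists of rank-one matrices $\bs y\otimes\bs x$, the gradient $J_{\tilde Q}(A)$ has rank one for every $A$ lying over a generic $L\in\Reg(\Sigma)$. Rank $\leq1$ is a closed condition and $J_{\tilde Q}(A)$ is nonzero at smooth points of the hypersurface, so the rank is exactly one on all of $\Reg(\Sigma)$; by the equality of ranks this is precisely the asserted coisotropy.

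For statement~(2), let $\Sigma\subseteq\Gr(l,\PP^n)$ be an irreducible coisotropic hypersurface. Coisotropy gives $J_{\tilde Q}(A)=\bs y(A)\otimes\bs x(A)$ of rank one at smooth points, with $[\bs x(A)]\in L$, and I define $X:=\overline{\{\,[\bs x(A)]\mid A\in\Reg(\Sigma^{\mathrm{St}})\ \text{of full rank}\,\}}\subseteq\PP^n$, which is irreducible as the closure of the image of an irreducible variety under a rational map. The dual variety $(\Sigma^{\mathrm{St}})^{\vee}$ is the closure of the gradient image $\{[\bs y(A)\otimes\bs x(A)]\}$; differentiating the semi-invariance shows this locus is stable under left multiplication by $GL(n-l)$, which acts transitively on nonzero first factors, so $\bs y(A)$ sweeps out all of $\mathbb{C}^{n-l}$ and $((\Sigma^{\mathrm{St}})^{\vee})^{\ast}$ is the Segre product $\PP^{n-l-1}\times X$. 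Biduality, together with the symmetry of the bilinear form defining $\ast$ (which gives $(Z^{\ast})^{\vee}=(Z^{\vee})^{\ast}$), then yields $\Sigma^{\mathrm{St}}=((\PP^{n-l-1}\times X)^{\vee})^{\ast}$. Comparing with the Cayley trick for $k:=\dim X$ and $i:=k+l+1-n$, so that $k-i=n-l-1$, identifies this with $\overline{p^{-1}(\CH_i(X))}$; projecting down gives $\Sigma=\CH_{\dim X+l+1-n}(X)$.

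I expect the main obstacle to be statement~(2), specifically showing that $(\Sigma^{\mathrm{St}})^{\vee}$ is the entire Segre product rather than a proper subvariety. The $GL(n-l)$-semi-invariance is what makes this work: it forces the first tensor factor to fill $\mathbb{C}^{n-l}$, so that the dual has the expected dimension, and it also guarantees that the index $i=\dim X+l+1-n$ lands in the admissible range $\{0,\dots,\dim X-\codim X^{\vee}+1\}$ of Corollary~\ref{cor:dim}, so that $\CH_i(X)$ is genuinely a hypersurface. The remaining technical points are the bookkeeping of the identifications $\ast$ and $\vee$ on the matrix space and checking that the rational map $A\mapsto[\bs x(A)]$ is dominant onto $X$ with image meeting $\Reg(X)$.
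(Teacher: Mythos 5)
Your proof is correct, and at the top level it follows the same route as the paper: part (1) via the Cayley trick plus reflexivity (the gradient at a smooth point of $((\PP^{k-i}\times X)^\vee)^\ast$ is the point of tangency on the Segre variety, hence has rank one), and part (2) by defining $X$ as the closure of the row spans of the gradients, proving that $((\Sigma^{\mathrm{St}})^\vee)^\ast$ is the \emph{full} Segre product $\PP^{n-l-1}\times X$, and then concluding by biduality and the Cayley trick. Where you genuinely diverge is in the infrastructure that transfers the rank-one condition between Grassmannian and Stiefel coordinates. The paper gets this from an explicit cofactor computation (Lemma~\ref{lem:jacobi_mult}), a chart-comparison continuity argument (Proposition~\ref{prop:affine_characterization}), and Proposition~\ref{prop:stiefel_characterization}, whose proof needs $l\geq 1$ to place a $2\times 2$-minor outside the pivot block; the surjectivity onto the full Segre product in part (2) is then obtained by an explicit construction of a matrix with conjugated rows. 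You instead extract everything from the $\det(g)^m$-semi-invariance of $\tilde Q$: differentiating in $g$ gives $J_{\tilde Q}(A)A^T=m\tilde Q(A)I_{n-l}$, which simultaneously puts the gradient rows into $\ker A$ and yields the factorization $J_{\tilde Q}(A)=J_{Q\circ\rho}(M_L)\,(-M_L^T\mid I_{l+1})$, hence the equality of ranks; differentiating in $A$ gives the equivariance $J_{\tilde Q}(gA)=\det(g)^m(g^{-1})^TJ_{\tilde Q}(A)$, which is Lemma~\ref{lem:jacobi_mult} in disguise (as $\mathrm{adj}(U)^T=\det(U)(U^{-1})^T$), so that transitivity of $GL(n-l)$ on nonzero vectors replaces the paper's explicit construction. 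Your version buys a conceptual source for the computational lemma and uniformity: no case split for $l=0$, since for $l=0$ your identity forces $\mathrm{rank}\,J_{\tilde Q}(A)\leq l+1=1$ automatically, matching the fact that every hypersurface in $\PP^n$ is trivially coisotropic, whereas the paper treats $l=0$ (and the analogous trivial range in part (1)) separately. The loose ends you flag are real but benign and are left implicit in the paper as well: that the restriction of $\tilde Q$ to a chart is $Q\circ\rho$ up to a constant (both are reduced equations of the same irreducible affine hypersurface), that every algebraic character of $GL(n-l)$ is a power of $\det$, and that $i=\dim X+l+1-n$ lies in the admissible range of Corollary~\ref{cor:dim} --- the latter because $\Sigma^{\mathrm{St}}=((\PP^{n-l-1}\times X)^\vee)^\ast$ is a hypersurface.
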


This is remarkable since Definition \ref{defn:coisotropy} does not depend on the underlying projective variety $X$. Before proving the theorem, some helpful and easy characterizations of coisotropy will be established. The first one says that it is enough to check coisotropy for a hypersurface in the Grassmannian on \emph{one fixed affine chart} of the Grassmannian to deduce coisotropy for the whole hypersurface. This statement can also be found as Proposition 3.12 in Chapter 4 of \cite{gkz}.

\begin{prop}
\label{prop:affine_characterization}
Let $\Sigma \subseteq \Gr(l,\PP^n)$ be an irreducible hypersurface, given by a homogeneous polynomial $Q$ in primal Pl\"ucker coordinates.
Moreover, fix a primal affine chart $U_{i_1, \ldots, i_{n-l}}$ of $\Gr(l,\PP^n)$ together with the map $\rho := \rho_{i_1, \ldots, i_{n-l}}$ sending $(n-l) \times (l+1)$-matrices to their corresponding primal Pl\"ucker coordinates.
Then $\Sigma$ is coisotropic (in the sense of Definition \ref{defn:coisotropy}) if and only if the $2 \times 2$-minors of $J_{Q \circ \rho}(M)$ are zero for all $M \in V(Q \circ \rho)$.
\end{prop}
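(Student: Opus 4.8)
The plan is to reduce both sides to the same pointwise rank-one condition and then to transport the information from the single chart $U_{i_1,\ldots,i_{n-l}}$ to the whole of $\Sigma$ by a semicontinuity argument. I will use two facts recorded just before the statement. First, for $L$ in the chart $U_{i_1,\ldots,i_{n-l}}$ the hyperplane $T_L\Sigma$ is coisotropic if and only if $\mathrm{rank}\,J_{Q\circ\rho}(M_L)=1$; the same holds verbatim in every other primal affine chart $U_J$ with its own map $\rho_J$. Second, since $Q$ is the reduced defining polynomial of the irreducible hypersurface $\Sigma$, a point $L\in\Sigma\cap U_{i_1,\ldots,i_{n-l}}$ is smooth exactly when $M_L$ is a smooth point of $V(Q\circ\rho)$, i.e.\ exactly when $J_{Q\circ\rho}(M_L)\neq 0$. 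Consequently, at a smooth point the vanishing of all $2\times 2$-minors (rank $\le 1$) is equivalent to rank exactly one, because the Jacobian cannot vanish there.

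For the forward implication, assume $\Sigma$ is coisotropic. Then every smooth $L\in\Sigma\cap U_{i_1,\ldots,i_{n-l}}$ satisfies $\mathrm{rank}\,J_{Q\circ\rho}(M_L)=1$, so all $2\times 2$-minors of $J_{Q\circ\rho}$ vanish at $M_L$. As $Q\circ\rho$ is reduced, its smooth locus~-- the set where $J_{Q\circ\rho}\neq 0$~-- is dense in $V(Q\circ\rho)$; the $2\times 2$-minors are polynomials vanishing on this dense set, hence they vanish on all of $V(Q\circ\rho)$.

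For the converse, suppose the $2\times 2$-minors of $J_{Q\circ\rho}$ vanish identically on $V(Q\circ\rho)$. At a smooth $L\in\Sigma\cap U_{i_1,\ldots,i_{n-l}}$ this forces $\mathrm{rank}\,J_{Q\circ\rho}(M_L)\le 1$, while smoothness forces $J_{Q\circ\rho}(M_L)\neq 0$; hence the rank equals one and $T_L\Sigma$ is coisotropic. To reach the smooth points outside this chart I claim that the coisotropic locus $C:=\{L\in\mathrm{Reg}(\Sigma)\mid T_L\Sigma\text{ is coisotropic}\}$ is Zariski-closed in $\mathrm{Reg}(\Sigma)$. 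Closedness is local, and inside each chart $U_J$ the locus $C$ is cut out of $\mathrm{Reg}(\Sigma)\cap U_J$ by the vanishing of the $2\times 2$-minors of $J_{Q\circ\rho_J}$, which are regular functions; as the charts $U_J$ cover the Grassmannian, $C$ is closed. Since $\Sigma$ is irreducible, $\mathrm{Reg}(\Sigma)$ is irreducible, and (the chart being one that $\Sigma$ meets) $\mathrm{Reg}(\Sigma)\cap U_{i_1,\ldots,i_{n-l}}$ is a nonempty open, hence dense, subset already contained in $C$. A closed subset of an irreducible space containing a dense subset is the whole space, so $C=\mathrm{Reg}(\Sigma)$ and $\Sigma$ is coisotropic.

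The crux, and the step I expect to need the most care, is the closedness of $C$ in $\mathrm{Reg}(\Sigma)$. What makes it work is phrasing coisotropy as rank $\le 1$ rather than rank $=1$: this is a closed condition, and it is equivalent to rank $=1$ precisely because the conormal covector at a smooth point never vanishes, so the rank is automatically at least one. Granting this, the passage from one chart to the entire hypersurface is a routine density argument powered by the irreducibility of $\Sigma$.
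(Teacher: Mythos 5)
Your main-case argument is sound, and structurally it is close to the paper's: both directions rest on the equivalence, at a smooth point, between coisotropy of $T_L\Sigma$ and the vanishing of the $2\times 2$-minors of the Jacobian (rank $\leq 1$ being upgraded to rank $=1$ because the gradient of a reduced equation cannot vanish at a smooth point), and the spreading-out from one chart to all of $\Sigma$ is a continuity-plus-density argument. Your packaging of that last step~-- the locus $C$ of smooth points with coisotropic tangent hyperplane is closed in $\mathrm{Reg}(\Sigma)$ because in every chart it is cut out by the minors, and irreducibility then forces $C=\mathrm{Reg}(\Sigma)$~-- is a clean equivalent of the paper's more hands-on version, which approximates a smooth point of $\Sigma$ in a second chart by nearby points lying in both charts and invokes continuity of the minors there.

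There is, however, a genuine gap in your converse, and you flagged it yourself without resolving it: the parenthetical ``the chart being one that $\Sigma$ meets'' is not a hypothesis of the proposition. The one situation where it fails is exactly the one your argument cannot handle: if $\Sigma\cap U_{i_1,\ldots,i_{n-l}}=\emptyset$, then $\Sigma$ is contained in the irreducible hypersurface $\lbrace p_{i_1\ldots i_{n-l}}=0\rbrace$, hence equals it, so $Q=p_{i_1\ldots i_{n-l}}$ up to scale. In that case $Q\circ\rho$ is the constant $1$, so $V(Q\circ\rho)=\emptyset$ and the minor condition holds vacuously~-- yet the proposition still asserts that $\Sigma$ is coisotropic, which now requires a separate proof. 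Your density argument collapses here, since $\mathrm{Reg}(\Sigma)\cap U_{i_1,\ldots,i_{n-l}}$ is empty rather than dense, so nothing forces $C=\mathrm{Reg}(\Sigma)$. This is precisely why the paper splits off the case $Q=p_{i_1\ldots i_{n-l}}$ and settles it by direct computation: in any other chart, $Q\circ\rho'$ is a maximal minor of the chart matrix, its Jacobian is the corresponding cofactor matrix, and a cofactor matrix has rank at most one exactly where the minor vanishes, so the Schubert divisor is coisotropic. Adding this one case (or, equivalently, proving rather than assuming that $\Sigma$ meets the chart, which is impossible in general) completes your proof; everything else is correct.
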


\begin{proof}
Assume first that $\Sigma$ is coisotropic. For every smooth point $L \in \Sigma$ representable in the affine chart $U_{i_1, \ldots, i_{n-l}}$ by the $(n-l)\times(l+1)$-matrix $M_L$, we have that the rank of $J_{Q \circ \rho}(M_L)$ is one. For all singular points this rank is zero. This proves one direction.

For the other direction, assume that $J_{Q \circ \rho}(M)$ has rank at most one for all $(n-l)\times(l+1)$-matrices $M \in V(Q \circ \rho)$, i.e., that $J_{Q \circ \rho}(M)$ has rank exactly one for all $M \in \mathrm{Reg}(V(Q \circ \rho))$. 
If $Q=p_{i_1 \ldots i_{n-l}}$, then one can directly calculate that the coisotropy condition is also satisfied on all other affine charts; hence $\Sigma$ is coisotropic.
Otherwise, let $L \in \mathrm{Reg}(\Sigma)$ be representable in the affine chart $U_{i'_1, \ldots, i'_{n-l}}$ by the $(n-l)\times(l+1)$-matrix $M_L$, and let $\rho' := \rho_{i'_1, \ldots, i'_{n-l}}$ be the corresponding Pl\"ucker map, where $(i'_1, \ldots, i'_{n-l})$ is different from $(i_1, \ldots, i_{n-l})$. It is left to show that the rank of $J_{Q \circ \rho'}(M_L)$ is one.
Since the smooth points in $\Sigma$ form an open subset of $\Sigma$, there is an open neighborhood~$\mathcal{U}$ of $M_L$ such that all $M \in \mathcal{U} \cap V(Q \circ \rho')$ correspond to smooth points in $\Sigma$. Due to the assumption that $Q \neq p_{i_1 \ldots i_{n-l}}$, every open neighborhood $\mathcal{V} \subseteq \mathcal{U}$ of $M_L$ contains a point $M \in \mathcal{V} \cap V(Q \circ \rho')$ whose corresponding smooth point in $\Sigma$ can be represented in the affine chart $U_{i_1, \ldots, i_{n-l}}$.
By assumption, $J_{Q \circ \rho'}(M)$ has rank one for all those $M$. It follows from the continuity of the $2 \times 2$-minors of $J_{Q \circ \rho'}$ that the rank of $J_{Q \circ \rho'}(M_L)$ is also one.
\end{proof}

It is sometimes more practical to work in Stiefel coordinates than in an affine chart. For example, the defining polynomial of a hypersurface in a Grassmannian is still homogeneous when written in Stiefel coordinates, but generally not in affine coordinates. Furthermore, the Cayley trick from Proposition \ref{prop:cayley_trick} uses Stiefel coordinates. To give the coisotropic characterization in Stiefel coordinates (Proposition \ref{prop:stiefel_characterization}), a technical lemma is needed.

\begin{lem}
\label{lem:jacobi_mult}
Denote by $\pl$ the map that sends a given matrix to its maximal minors.
Let $Q$ be a homogeneous polynomial in Pl\"ucker coordinates of $\Gr(l,\PP^n)$, and let $N \in \mathbb{C}^{(n-l) \times (n+1)}$ as well as $U \in \mathbb{C}^{(n-l) \times (n-l)}$ with $\det(U) \neq 0$. Then we have
\begin{align}
\label{eq:jacobi_mult}
J_{Q \circ \pl}(UN) = \det(U)^{\mathrm{deg}(Q)-1} \mathrm{adj}(U)^T J_{Q \circ \pl}(N).
\end{align}
\end{lem}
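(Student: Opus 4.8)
The plan is to derive the identity from a single scaling property of maximal minors together with the homogeneity of $Q$, after which everything follows from the chain rule and the classical formula $U^{-1} = \det(U)^{-1}\mathrm{adj}(U)$.

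First I would record how left multiplication by $U$ acts on Pl\"ucker coordinates. Every maximal minor of $UN$ indexed by a column set $\lbrace i_1,\ldots,i_{n-l}\rbrace$ is the determinant of the product of $U$ with the corresponding $(n-l)\times(n-l)$ column submatrix of $N$; by multiplicativity of the determinant it equals $\det(U)$ times the matching minor of $N$. Hence $\pl(UN) = \det(U)\cdot\pl(N)$. Writing $d := \deg(Q)$ and using that $Q$ is homogeneous of degree $d$ in the Pl\"ucker variables, this gives
\begin{equation}
\label{eq:plan_scaling}
(Q\circ\pl)(UN) = Q\bigl(\det(U)\cdot\pl(N)\bigr) = \det(U)^d\,(Q\circ\pl)(N).
\end{equation}

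Next I would differentiate both sides of \eqref{eq:plan_scaling} with respect to the entries of $N$. Set $F := Q\circ\pl$ and let $G$ be the left-hand side as a function of $N$, so that $G(N) = F(UN)$. Since $(UN)_{ij} = \sum_k U_{ik}N_{kj}$, the chain rule yields $\frac{\partial G}{\partial N_{ab}}(N) = \sum_i U_{ia}\,[J_F(UN)]_{ib}$, that is, $J_G(N) = U^T J_F(UN)$. Differentiating the right-hand side of \eqref{eq:plan_scaling} instead gives $J_G(N) = \det(U)^d J_F(N)$. Equating the two expressions produces $U^T J_F(UN) = \det(U)^d J_F(N)$.

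Finally I would solve for $J_F(UN)$ by left-multiplying with $(U^T)^{-1}$. Using $(U^T)^{-1} = \det(U)^{-1}\mathrm{adj}(U)^T$, this gives $J_F(UN) = \det(U)^{d-1}\mathrm{adj}(U)^T J_F(N)$, which is exactly \eqref{eq:jacobi_mult}. The only delicate point~-- and the step that must be handled with care~-- is the chain-rule computation: the derivative is taken in the entries of $N$ while the partials of $F$ are evaluated at $UN$, and tracking which index is summed is precisely what yields $U^T$ (rather than $U$), and hence the transposed adjugate $\mathrm{adj}(U)^T$ in the final formula rather than $\mathrm{adj}(U)$.
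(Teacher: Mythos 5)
Your proof is correct, and it takes a genuinely different route from the paper's. You argue globally: from the scaling relation $\pl(UN) = \det(U)\,\pl(N)$ and the homogeneity of $Q$ you obtain the functional identity $(Q\circ\pl)(UN) = \det(U)^{\deg Q}\,(Q\circ\pl)(N)$, valid identically in $N$, which you then differentiate in the entries of $N$ to get $U^T J_{Q\circ\pl}(UN) = \det(U)^{\deg Q} J_{Q\circ\pl}(N)$, and finally you solve for $J_{Q\circ\pl}(UN)$ via $(U^T)^{-1} = \det(U)^{-1}\mathrm{adj}(U)^T$. The paper instead proves the identity entry by entry: it expands $\partial(Q\circ\pl)/\partial a_{ij}$ through the individual Pl\"ucker coordinates, uses $\frac{\partial \det}{\partial a_{ij}}(A) = \mathrm{adj}(A)_{ji}$ together with multiplicativity of the adjugate, $\mathrm{adj}(UA)=\mathrm{adj}(A)\,\mathrm{adj}(U)$, and then tracks signs and positions of column indices until the right-hand side of the claimed formula appears. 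Your approach buys brevity and conceptual transparency -- all the minor-level bookkeeping is absorbed into the single observation $\pl(UN)=\det(U)\pl(N)$ -- at the cost of genuinely using invertibility of $U$ (to cancel $U^T$), whereas the paper's computation never inverts $U$; since the lemma assumes $\det(U)\neq 0$ anyway, and the singular case would in any event follow by Zariski density, nothing is lost. The one step that needed care, the matrix chain rule producing $U^T$ rather than $U$, you carried out and flagged correctly.
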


\begin{proof}
For $m \in \mathbb{N}$, let $[m] := \lbrace 1, \ldots, m \rbrace$.
Note first that
\begin{align*}
\frac{\partial \det}{\partial a_{ij}}(A) = (-1)^{i+j} \det(A_{[n-l]\setminus \lbrace i \rbrace,[n-l]\setminus \lbrace j \rbrace}) = \mathrm{adj}(A)_{ji},
\end{align*}
holds for all matrices $A \in \mathbb{C}^{(n-l) \times (n-l)}$. This identity implies
\begin{align*}
\frac{\partial \det}{\partial a_{ij}}(UA) &= \mathrm{adj}(UA)_{ji}
= \left( \mathrm{adj}(A) \mathrm{adj}(U) \right)_{ji}
= \sum \limits_{m=1}^{n-l} \mathrm{adj}(A)_{jm} \mathrm{adj}(U)_{mi} \\
&= (-1)^{j+i} \sum \limits_{m=1}^{n-l} \det(A_{[n-l]\setminus \lbrace m \rbrace,[n-l]\setminus \lbrace j \rbrace}) \det(U_{[n-l]\setminus \lbrace i \rbrace,[n-l]\setminus \lbrace m \rbrace}).
\end{align*}
For $I \subseteq [n+1]$ with $|I|=n-l$ and $j \in I$, let $\mathrm{pos}(j,I) \in [n-l]$ denote the position of $j \in I$ when $I$ is ordered strictly increasing (i.e., $j$ is the $\mathrm{pos}(j,I)$-th element of $I$). Then:
\begin{align*}
\frac{\partial (Q \circ \pl)}{\partial a_{ij}}(N) 
&= \sum \limits_I \frac{\partial Q}{\partial p_I} (\pl(N)) \frac{\partial p_I}{\partial a_{ij}}(N)\\
&= \sum \limits_{I: j \in I} \frac{\partial Q}{\partial p_I} (\pl(N)) (-1)^{i+\mathrm{pos}(j,I)} \det (N_{[n-l]\setminus \lbrace i \rbrace,I\setminus \lbrace j \rbrace}).
\end{align*}
Finally, set $d := \mathrm{deg}(Q)-1$ and show that the entries of the two matrices in \eqref{eq:jacobi_mult} are equal:
\begin{align*}
&\frac{\partial (Q \circ \pl)}{\partial a_{ij}}(UN) \\
=& \sum \limits_{I: j \in I} \frac{\partial Q}{\partial p_I} (\det(U)\pl(N)) (-1)^{i+\mathrm{pos}(j,I)} \sum \limits_{m=1}^{n-l} \det(U_{[n-l]\setminus \lbrace i \rbrace,[n-l]\setminus \lbrace m \rbrace}) \det(N_{[n-l]\setminus \lbrace m \rbrace,I\setminus \lbrace j \rbrace}) \\
= &\det(U)^d \sum \limits_{m=1}^{n-l} \det(U_{[n-l]\setminus \lbrace i \rbrace,[n-l]\setminus \lbrace m \rbrace}) \\
&\quad \cdot (-1)^{i+m}
\sum \limits_{I: j \in I} \frac{\partial Q}{\partial p_I} (\pl(N)) (-1)^{m+\mathrm{pos}(j,I)}\det(N_{[n-l]\setminus \lbrace m \rbrace,I\setminus \lbrace j \rbrace}) \\
= &\det(U)^d \sum \limits_{m=1}^{n-l} \det(U_{[n-l]\setminus \lbrace i \rbrace,[n-l]\setminus \lbrace m \rbrace}) (-1)^{i+m}
\frac{\partial (Q \circ \pl)}{\partial a_{mj}}(N) \\
= &\det(U)^d \sum \limits_{m=1}^{n-l} \mathrm{adj}(U)_{mi}
\frac{\partial (Q \circ \pl)}{\partial a_{mj}}(N).
\qedhere
\end{align*}
\end{proof}

\begin{prop}
\label{prop:stiefel_characterization}
Let $l \geq 1$, and let $\Sigma \subseteq \Gr(l,\PP^n)$ be an irreducible hypersurface, given by a homogeneous polynomial $Q$ in primal Pl\"ucker coordinates. Then~$\Sigma$ is coisotropic (in the sense of Definition \ref{defn:coisotropy}) if and only if the $2\times 2$-minors of $J_{Q \circ \pl}(N)$ are zero for all $N \in V(Q \circ \pl)$.
\end{prop}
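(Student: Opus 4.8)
The plan is to reduce the Stiefel statement to the affine-chart statement of Proposition~\ref{prop:affine_characterization} by comparing the Jacobian $J_{Q\circ\pl}$ with the affine Jacobian $J_{Q\circ\rho}$. Fix the chart $U_{i_1\ldots i_{n-l}}$, and for an affine coordinate matrix $M$ let $A(M)$ be the full $(n-l)\times(n+1)$ Stiefel matrix carrying standard basis vectors in the columns $i_1,\ldots,i_{n-l}$ and $M$ in the remaining columns, so that $\rho(M)=\pl(A(M))$ and $(Q\circ\pl)(A(M))=(Q\circ\rho)(M)$. Since $M\mapsto A(M)$ is the identity on the free entries, the chain rule shows that $J_{Q\circ\rho}(M)$ is exactly the submatrix of $J_{Q\circ\pl}(A(M))$ formed by the columns not indexed by $i_1,\ldots,i_{n-l}$. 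This already gives the direction ``Stiefel rank one $\Rightarrow$ coisotropic'' cheaply: if every $2\times2$ minor of $J_{Q\circ\pl}(N)$ vanishes on $V(Q\circ\pl)$, then $J_{Q\circ\pl}(A(M))$ has rank at most one for each $M\in V(Q\circ\rho)$, hence so does its submatrix $J_{Q\circ\rho}(M)$, and Proposition~\ref{prop:affine_characterization} for the one fixed chart yields coisotropy.

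The converse requires controlling the \emph{extra} columns of $J_{Q\circ\pl}$ that are invisible in the affine chart, and this is the crux. The key tool is the $GL(n-l)$-invariance relation
\begin{align*}
J_{Q\circ\pl}(N)\,N^T \;=\; \deg(Q)\cdot (Q\circ\pl)(N)\cdot I_{n-l},
\end{align*}
which I would obtain by differentiating $(Q\circ\pl)(UN)=\det(U)^{\deg(Q)}(Q\circ\pl)(N)$ (a consequence of $\pl(UN)=\det(U)\,\pl(N)$ and the homogeneity of $Q$) in the entries of $U$ at $U=I_{n-l}$. On $V(Q\circ\pl)$ the right-hand side vanishes, so $J_{Q\circ\pl}(N)\,N^T=0$. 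Specializing to $N=A(M)$, whose special columns form an identity block, this forces each special column of $J_{Q\circ\pl}(A(M))$ to be an explicit linear combination of the free columns, whence $\operatorname{rank} J_{Q\circ\pl}(A(M))=\operatorname{rank} J_{Q\circ\rho}(M)$. Combining this with Lemma~\ref{lem:jacobi_mult}, which shows that $N\mapsto UN$ alters $J_{Q\circ\pl}$ only by the invertible factor $\det(U)^{\deg(Q)-1}\mathrm{adj}(U)^T$ and hence preserves its rank, I arrive at the clean claim: for every full-rank $N\in V(Q\circ\pl)$ with affine representative $M$ in any chart containing it, $\operatorname{rank} J_{Q\circ\pl}(N)=\operatorname{rank} J_{Q\circ\rho}(M)$.

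With this claim the converse is immediate. If $\Sigma$ is coisotropic, then Proposition~\ref{prop:affine_characterization}, applied with each chart in turn, gives $\operatorname{rank} J_{Q\circ\rho}\le 1$ on every $V(Q\circ\rho)$; the claim then upgrades this to $\operatorname{rank} J_{Q\circ\pl}(N)\le 1$ for every full-rank $N\in V(Q\circ\pl)$. To reach all of $V(Q\circ\pl)$ I would invoke a density argument: the rank-deficient matrices form a locus of codimension $l+2\ge 2$, while $V(Q\circ\pl)$ is a hypersurface, so the full-rank points are Zariski dense in it; since the $2\times2$ minors of $J_{Q\circ\pl}$ are polynomials, their vanishing on this dense subset forces vanishing on all of $V(Q\circ\pl)$.

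I expect the main obstacle to be exactly the invariance relation and the rank identification in the second paragraph: one must show that the $n+1$ Stiefel directions carry no more information than the $l+1$ affine directions, which fails for an arbitrary polynomial and holds only because $Q\circ\pl$ is pulled back from the Grassmannian (equivalently, is $GL(n-l)$-semi-invariant). Getting the indices and the adjugate/transpose factors right in the two displayed identities, and correctly using Lemma~\ref{lem:jacobi_mult} to pass between an arbitrary representative and its affine form, is where the care is needed; the density step and the easy implication are routine.
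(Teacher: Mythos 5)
Your proof is correct, and for the forward implication (Stiefel rank one implies coisotropic) it is the same as the paper's: both restrict $J_{Q\circ\pl}$ at $(I_{n-l}\mid M)$ to the free columns and invoke Proposition~\ref{prop:affine_characterization} on a single chart. For the converse, your route is genuinely different. The paper argues minor by minor: after a change of coordinates it assumes all maximal minors of $N$ are non-zero, moves the chosen $2\times2$ minor of $J_{Q\circ\pl}$ out of the first $n-l$ columns (this is the only place the hypothesis $l\geq 1$ enters), writes $N=U\,(I_{n-l}\mid M)$, and applies Lemma~\ref{lem:jacobi_mult} to identify the last $l+1$ columns of $J_{Q\circ\pl}(N)$ with an invertible multiple of $J_{Q\circ\rho}(M)$. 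Your semi-invariance identity $J_{Q\circ\pl}(N)N^T=\deg(Q)\,(Q\circ\pl)(N)\,I_{n-l}$ (correctly derived by differentiating $(Q\circ\pl)(UN)=\det(U)^{\deg Q}(Q\circ\pl)(N)$ at $U=I_{n-l}$) replaces both reductions at once: on $V(Q\circ\pl)$ it exhibits the identity-block columns of the Jacobian as explicit linear combinations of the free columns, so the full Stiefel Jacobian has the same rank as the affine Jacobian, and Lemma~\ref{lem:jacobi_mult} then transports this to an arbitrary full-rank $N$. Your approach buys three things: no case analysis on where the $2\times2$ minor sits; the argument works verbatim for $l=0$ (the paper itself notes that $l\geq1$ is needed only ``to place a $2\times 2$-minor outside of a maximal square submatrix'', so the restriction is an artifact of its method); and your explicit density step handles rank-deficient $N\in V(Q\circ\pl)$, a point the paper's ``change of coordinates'' remark does not cover, since no coordinate change makes the maximal minors of a rank-deficient matrix non-zero --- and your justification is exactly right: the rank-deficient locus has codimension $l+2\geq 2$, while every irreducible component of the hypersurface $V(Q\circ\pl)$ has codimension one, so full-rank points are dense and the polynomial minors extend by continuity. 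What the paper's approach buys in exchange is brevity: it avoids introducing the Euler-type identity, at the cost of the extra hypothesis and of leaving the degenerate case implicit.
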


\begin{proof}
First assume that the rank of $J_{Q \circ \pl}(N)$ is at most one for all $N \in V(Q \circ \pl)$. By Proposition \ref{prop:affine_characterization}, it is enough to consider the affine chart $U_{1, \ldots, n-l}$ together with the Pl\"ucker map $\rho := \rho_{1, \ldots, n-l}$. Hence, it is sufficient to show that the rank of $J_{Q \circ \rho}(M)$ is at most one when $M \in V(Q \circ \rho)$.
Indeed, $(I_{n-l} | M) \in V(Q \circ \pl)$ and the rank of $J_{Q \circ \pl}(I_{n-l} | M)$ is at most one by assumption. Since the last $l+1$ columns of $J_{Q \circ \pl}(I_{n-l} | M)$ coincide with~$J_{Q \circ \rho}(M)$, it follows that $J_{Q \circ \rho}(M)$ has rank at most one.

Secondly, let $\Sigma$ be coisotropic, and let $N \in V(Q \circ \pl)$. We may assume that all $(n-l)\times(n-l)$-minors of $N$ are non-zero, since this always holds up to a change of coordinates. We now fix a $2\times 2$-minor of $J_{Q \circ \pl}$ and show that it vanishes on $N$. Without loss of generality, this minor does not contain entries from the first $n-l$ columns of $J_{Q \circ \pl}$ (by the assumption $l \geq 1$). Hence, it is enough to show that the last $l+1$ columns of~$J_{Q \circ \pl}(N)$ form a matrix of rank at most one. Since the first $n-l$ columns of $N$ form an invertible matrix $U$, one can write $N = U \cdot (I_{n-l} | M)$ where $M$ is an $(n-l)\times (l+1)$-matrix. Moreover, $\rho := \rho_{1, \ldots, n-l} = \pl(I_{n-l}|\cdot)$ implies
\begin{align*}
\frac{\partial (Q \circ \rho)}{\partial m_{ij}}(M)
=\frac{\partial (Q \circ \pl(I_{n-l} | \cdot))}{\partial m_{ij}}(M)
=\frac{\partial (Q \circ \pl)}{\partial m_{ij}}(I_{n-l} | M).
\end{align*}
Together with Lemma \ref{lem:jacobi_mult} and $M \in V( Q \circ \rho )$, this leads to
\begin{align*}
\mathrm{rank} \left( J_{Q \circ \pl}(N)_{\lbrace n-l+1, \ldots, n+1 \rbrace} \right)
&= \mathrm{rank} \left( \det(U)^{\deg(Q)-1} \mathrm{adj}(U)^T J_{Q \circ \pl}(I_{n-l}|M)_{\lbrace n-l+1, \ldots, n+1 \rbrace} \right) \\
&\leq \mathrm{rank} \left( J_{Q \circ \pl}(I_{n-l}|M)_{\lbrace n-l+1, \ldots, n+1 \rbrace} \right) \\
&= \mathrm{rank} \left( J_{Q \circ \rho}(M) \right)\leq 1,
\end{align*}
where $A_{\lbrace n-l+1, \ldots, n+1 \rbrace}$ denotes the last $l+1$ columns of an $(n-l)\times(n+1)$-matrix $A$.
\end{proof}

The above proposition uses the assumption $l \geq 1$ to place a $2\times 2$-minor outside of a maximal square submatrix. Note that for $l=0$, every hypersurface in $\Gr(0,\PP^n) = \mathbb{P}^n$ is coisotropic. With this, it can be proven that the two given definitions of coisotropy coincide.

\begin{proof}[Proof of Theorem~\ref{thm:coisotropy_equiv}.]
For the first part, let $k := \dim X$ and let $Q$ be the defining polynomial of $\CH_i(X)$ in the primal Pl\"ucker coordinates of $\Gr(n-k+i-1,\PP^n)$.
The map $\pl$ sends a given matrix to its maximal minors.
By Proposition \ref{prop:cayley_trick}, the polynomial $Q \circ \pl$ is the defining equation of $((\mathbb{P}^{k-i} \times X)^\vee)^\ast$.
If $n-k+i \leq 1$ or $n-k+i \geq n$, then $\CH_i(X)$ is trivially coisotropic.
Thus assume $1 < n-k+i < n$.
By Proposition \ref{prop:stiefel_characterization}, it is enough to show that the rank of $J_{Q \circ \pl}(N)$ equals one for all $N \in \mathrm{Reg}(((\mathbb{P}^{k-i} \times X)^\vee)^\ast)$.
For such an~$N$, denote by $H_N$ the tangent hyperplane to $((\mathbb{P}^{k-i} \times X)^\vee)^\ast$ at $N$.
Then $H_N$ is a point in $(((\mathbb{P}^{k-i} \times X)^\vee)^\ast )^\vee = (\mathbb{P}^{k-i} \times X)^\ast$, by biduality. Since the defining equation for the hyperplane~$H_N$ is given by $J_{Q \circ \pl}(N)$, i.e.,
\begin{align*}
H_N = \left\lbrace M \in \mathbb{P}\left( \mathbb{C}^{(k-i+1) \times (n+1)} \right) \;\middle\vert\; \sum \limits_{i,j} \frac{\partial (Q \circ \pl)}{\partial a_{ij}} (N) \cdot M_{ij} = 0 \right\rbrace,
\end{align*}
the point in $\mathbb{P}^{k-i} \times X$ corresponding to $H_N$ is $J_{Q \circ \pl}(N)$. Hence, $J_{Q \circ \pl}(N) \in \mathbb{P}^{k-i} \times X$, which implies that the rank of $J_{Q \circ \pl}(N)$ is one.

Consider now the second part of the theorem.
Let again $Q$ be the defining polynomial of $\Sigma \subseteq \Gr(l,\PP^n)$ in primal Pl\"ucker coordinates, such that $Q \circ \pl$ is the defining equation in primal Stiefel coordinates.
If $l = 0$, then $\Sigma \subseteq \mathbb{P}^n$ and $\CH_0(\Sigma)=\Sigma$.
Therefore, assume that $l \geq 1$.
For the hypersurface $V(Q \circ \pl)$ we have~-- as above~-- that
\begin{align*}
\left(V(Q \circ \pl)^\vee\right)^\ast = \overline{\left\lbrace J_{Q \circ \pl}(N) \;\middle\vert\; N \in V(Q \circ \pl), \mathrm{rank} \left( J_{Q \circ \pl}(N) \right) \neq 0 \right\rbrace} \subseteq \PP \left( \mathbb{C}^{(n-l) \times (n+1)} \right).
\end{align*}
By Proposition \ref{prop:stiefel_characterization}, the rank of $J_{Q \circ \pl}(N) \in \mathbb{C}^{(n-l) \times (n+1)}$ is at most one for all $N \in V(Q \circ \pl)$, which implies that $(V(Q \circ \pl)^\vee)^\ast \subseteq  \mathbb{P}^{n-l-1} \times \mathbb{P}^n$.
For all smooth $N \in V(Q \circ \pl)$, denote by $x_N \in \mathbb{P}^n$ the projective point spanning the 1-dimensional affine row space of $J_{Q \circ \pl}(N)$.
This defines an irreducible projective variety
\begin{align*}
X := \overline{
\left\lbrace x_N \;\middle\vert\; N \in V(Q \circ \pl), \mathrm{rank} \left( J_{Q \circ \pl}(N) \right) \neq 0
\right\rbrace} \subseteq \mathbb{P}^n
\end{align*}
such that $(V(Q \circ \pl)^\vee)^\ast \subseteq \mathbb{P}^{n-l-1} \times X$.
In fact, equality holds: $(V(Q \circ \pl)^\vee)^\ast = \mathbb{P}^{n-l-1} \times X$.
To see this, consider $N \in \mathrm{Reg}(V(Q \circ \pl))$ and $y = (y_1 : \ldots : y_{n-l}) \in \mathbb{P}^{n-l-1}$.
Without loss of generality we can assume that $y_1=1$. Denote by $v \in \mathbb{C}^{n-l}$ a non-zero column of $J_{Q \circ \pl}(N)$. Then we have for all $0 \leq j \leq n$ that the $j$-th column of $J_{Q \circ \pl}(N)$ is a scalar multiple $\mu_j v$ such that $(\mu_0 : \ldots : \mu_n)=x_N$. Pick now a basis $(w^{(2)}, \ldots, w^{(n-l)})$ of $\lbrace w \in \mathbb{C}^{n-l} \mid \sum_{i=1}^{n-l} \overline{w_i} v_i = 0 \rbrace$. Let $a_1 := \overline{v}$ and $a_i := y_i \overline{v} + \overline{w^{(i)}}$ (for $2 \leq i \leq n-l$) be the rows of a matrix $A \in \mathbb{C}^{(n-l) \times (n-l)}$. Then $A$ is invertible and $A \cdot J_{Q \circ \pl}(N)$ equals $y \otimes x_N$ (up to scaling). The adjugate matrix $U := \mathrm{adj}(A^T)$ is also invertible, $UN$ is a smooth point of $V(Q \circ \pl)$, and by Lemma \ref{lem:jacobi_mult},
\begin{align*}
J_{Q \circ \pl}(UN) = \det(U)^{\mathrm{deg}(Q)-1} \det(A)^{n-l-2}A \cdot J_{Q \circ \pl}(N)
\end{align*}
is equal to $y \otimes x_N$ (up to scaling).
It follows that $V(Q \circ \pl) = ((\mathbb{P}^{n-l-1} \times X)^\vee)^\ast$, which concludes the proof using the Cayley trick in Proposition \ref{prop:cayley_trick}.
\end{proof}

Theorem~\ref{thm:coisotropy_equiv} and Propositions~\ref{prop:affine_characterization} and \ref{prop:stiefel_characterization} provide practical tools to check for coisotropy of a given hypersurface in a Grassmannian. Moreover, the Cayley trick in Proposition~\ref{prop:cayley_trick} and the proof of Theorem~\ref{thm:coisotropy_equiv} give ways to recover the underlying projective variety.

\begin{ex}
All coisotropic forms in the Grassmannian $\Gr(1,\PP^3)$ of lines in $\mathbb{P}^3$ are either Chow forms of curves or Hurwitz forms of surfaces.
According to~\cite{catanese}, these define exactly the self-dual hypersurfaces in $\Gr(1,\PP^3)$. \hfill $\diamondsuit$
\end{ex}

\begin{ex}
\label{ex:runningFermatDual}
As in Example~\ref{ex:runningFermatCubic}, let $X \subseteq \mathbb{P}^3$ be the surface defined by the Fermat cubic \linebreak[4] $x_0^3 + x_1^3 + x_2^3 + x_3^3$.
The polynomials \eqref{eq:hurwitzFermat} and \eqref{eq:hurwitzDualFermat} given in Example~\ref{Ex:runningPlueckerEqations} are the Hurwitz form of $X$ in primal and dual Pl\"ucker coordinates.
Thus both polynomials define the same hypersurface $\CH_1(X) \subseteq \Gr(1,\PP^3)$.
Now consider the hypersurface $\CH_1(X)^\ast$ in the same Grassmannian $\Gr(1,\PP^3)$, whose defining equation in primal Pl\"ucker coordinates is the second polynomial \eqref{eq:hurwitzDualFermat} and whose defining equation in dual Pl\"ucker coordinates is the first polynomial \eqref{eq:hurwitzFermat}.
Recall that $\CH_1(X)^\ast$ is geometrically obtained from $\CH_1(X)$ by sending every line $L \in \CH_1(X)$ to its orthogonal complement $L^\perp$.
Using the characterization of coisotropy in Proposition~\ref{prop:affine_characterization}, it follows that $\CH_1(X)^\ast$ is also coisotropic.
By Theorem~\ref{thm:coisotropy_equiv}, the hypersurface $\CH_1(X)^\ast$ has an underlying projective variety:
it turns out that $\CH_1(X)^\ast$ is the first coisotropic hypersurface of the surface $(X^\vee)^\ast$ dual to $X$, which has degree 12 and is given by the last polynomial in Example~\ref{ex:runningFermatCubic}.
Hence, the two similar polynomials of degree 6 from Example~\ref{Ex:runningPlueckerEqations} are the Hurwitz forms of a surface of degree 3 and a surface of degree 12.
This behavior will be investigated in general in the following section. \hfill $\diamondsuit$
\end{ex}

\section{Duality}
\label{sec:dual}

So far only characterizations of coisotropy using primal coordinates were treated.
The case of dual coordinates will be considered now by proceeding as in Example~\ref{ex:runningFermatDual}.
It follows from Proposition \ref{prop:affine_characterization} that a hypersurface $\Sigma \subseteq \Gr(1,\PP^n)$ is coisotropic if and only if $\Sigma^\ast \subseteq \Gr(n-l-1,\PP^n)$ is coisotropic.
Furthermore, since changing between the two above Grassmannians is the same as changing between primal and dual coordinates, it follows that Propositions \ref{prop:affine_characterization} and \ref{prop:stiefel_characterization} also hold for dual (instead of primal) coordinates.
This raises the question of how the underlying projective varieties of $\Sigma$ and $\Sigma^\ast$ are related.

\begin{thm}
\label{thm:duality}
Let $\Sigma \subseteq \Gr(l,\PP^n)$ be an irreducible hypersurface.
Then $\Sigma$ is coisotropic if and only if $\Sigma^\ast \subseteq \Gr(n-l-1,\PP^n)$ is coisotropic.
In that case, their underlying projective varieties are projectively dual to each other.
More precisely, under the canonical isomorphism $\Gr(n-l-1, (\PP^n)^\ast) = \Gr(l, \PP^n)$, we have 
\begin{align*}
\CH_i(X) &= \CH_{\dim X - \codim X^\vee+1 -i}(X^\vee), \text{ and} \\
\CH_i(X)^\ast = \CH_i(X^\ast) &= \CH_{\dim X - \codim X^\vee+1 -i}((X^\vee)^\ast),
\end{align*}
where $X \subseteq \mathbb{P}^n$ is the irreducible variety such that $\Sigma = \CH_i(X)$ for $i := \dim X+l+1-n$.
\end{thm}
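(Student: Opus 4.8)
The first assertion of the theorem, that $\Sigma$ is coisotropic if and only if $\Sigma^\ast$ is, was already recorded in the discussion preceding the statement: by Propositions~\ref{prop:affine_characterization} and~\ref{prop:stiefel_characterization} coisotropy can be tested in either primal or dual Pl\"ucker coordinates, and the passage from $\Sigma$ to $\Sigma^\ast$ is exactly the interchange of these two coordinate systems fixed earlier. So it remains to identify the underlying varieties, and by Theorem~\ref{thm:coisotropy_equiv} this is precisely the content of the two displayed equalities. Writing $k := \dim X$, $c := \codim X^\vee$ and $l = n-k+i-1$, my plan is to prove the first equality $\CH_i(X) = \CH_{k-c+1-i}(X^\vee)$ (under the canonical isomorphism $\Gr(n-l-1,(\PP^n)^\ast) = \Gr(l,\PP^n)$) as the geometric core, and then to deduce the second line by transporting everything through the orthogonal-complement operation $\ast$.

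For the first equality I would argue on a dense open subset by matching witnesses. A generic $L \in \CH_i(X)$ comes with a smooth point $x \in \Reg(X)\cap L$ satisfying $\dim(L\cap T_xX)=i$; then $\dim(L+T_xX)=n-1$, so $H := \langle L, T_xX\rangle$ is the \emph{unique} hyperplane containing both. By construction $H$ is tangent to $X$ at $x$, whence $H\in X^\vee$ and, for generic $L$, $H\in\Reg(X^\vee)$; moreover $L\subseteq H$ says exactly that $H\in L^\vee$ under the canonical isomorphism. It then suffices to show that $H$ witnesses $L^\vee\in\CH_{k-c+1-i}(X^\vee)$, i.e.\ that $\dim(L^\vee\cap T_HX^\vee)=k-c+1-i$.

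This dimension count is the main obstacle, and it is exactly where the hypothesis $i\le k-c+1$ enters. Two geometric inputs drive it. First, the conormal space $(T_xX)^\vee$ is a linear subspace of $(\PP^n)^\ast$ contained in $X^\vee$ and passing through the smooth point $H$, hence $(T_xX)^\vee\subseteq T_HX^\vee$; on affine cones this reads, with $T := T_{\bs x}\cone(X)$ and $S := T_{\bs\eta}\cone(X^\vee)$ (here $\perp$ is the annihilator for the pairing of $\mathbb{C}^{n+1}$ with its dual), as $T^\perp\subseteq S$, equivalently $R := S^\perp \subseteq T$. Second, biduality gives that $x$ is tangent to $X^\vee$ at $H$, i.e.\ $T_HX^\vee\subseteq\{x=0\}$ in the sense of \eqref{eq:identifyHPpoint}, which translates into $\bs x\in R$. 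Letting $\Lambda := \cone(L)$, so that $\cone(L^\vee)=\Lambda^\perp$, a direct count of dimensions gives
\begin{align*}
\dim\bigl(\Lambda^\perp \cap S\bigr) = (k-c-i+1) + \dim(\Lambda\cap R).
\end{align*}
Since $R\subseteq T$ we have $\Lambda\cap R = (\Lambda\cap T)\cap R$, and both $\Lambda\cap T$ (of dimension $i+1$) and $R$ (of dimension $c$) contain $\bs x$ and sit inside $T$ (of dimension $k+1$). For a generic witness $\Lambda\cap T$ is a generic subspace of $T$ through $\bs x$, so this intersection attains its expected value $\max\{1,\,i+c-k\}=1$, the last equality using $i\le k-c+1$. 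Hence $\dim(L^\vee\cap T_HX^\vee)=k-c+1-i$, giving $\CH_i(X)\subseteq\CH_{k-c+1-i}(X^\vee)$. The reverse inclusion follows by applying the same argument to $X^\vee$: using $(X^\vee)^\vee=X$, $\dim X^\vee=n-c$ and $\codim(X^\vee)^\vee=n-k$, the shifted index $k-c+1-i$ is carried back to $i$, so that $\CH_{k-c+1-i}(X^\vee)\subseteq\CH_i(X)$ and equality holds. (As a consistency check, this matches $\delta_i(X)=\delta_{k-c+1-i}(X^\vee)$ together with Theorem~\ref{thm:degrees}.)

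Finally, the second line is obtained from the first by applying $\ast$. Because the construction of $\CH_i$ depends only on the intrinsic incidence of $X$ with linear subspaces, and because the coordinate-wise identification carrying $X$ to $X^\ast$ induces precisely the map $L\mapsto L^\ast$ on Grassmannians, one has $\CH_i(X)^\ast=\CH_i(X^\ast)$; applying $\ast$ to the first equality and substituting $(X^\vee)^\ast$ for $X^\vee$ then yields $\CH_i(X^\ast)=\CH_{k-c+1-i}((X^\vee)^\ast)$, as claimed. I expect the only delicate point in this last step to be bookkeeping of the various identifications among $\Gr(l,\PP^n)$, $\Gr(n-l-1,\PP^n)$ and $\Gr(n-l-1,(\PP^n)^\ast)$ that were fixed in the preliminaries.
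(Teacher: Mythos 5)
Your proposal is correct in its overall strategy, but it proves the core identity $\CH_i(X)=\CH_{k-c+1-i}(X^\vee)$ (writing $k=\dim X$, $c=\codim X^\vee$ as you do) by a genuinely different argument than the paper. The paper never computes tangent spaces of $X^\vee$ at all: it rewrites $\CH_i(X)$ as the closure of $\bigcup_{(x,H)}\mathcal{G}_{x,H}(l,\PP^n)$, where $\mathcal{G}_{x,H}:=\{L\mid x\in L\subseteq H\}$ and $(x,H)$ runs over the smooth locus $\mathcal{U}_{X,X^\vee}$ of the conormal variety, using only the equivalence of $\dim(L\cap T_xX)\geq i$ with "$L$ lies in a hyperplane tangent to $X$ at $x$". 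Since $\mathcal{G}_{x,H}(l,\PP^n)=\mathcal{G}_{H,x}(n-l-1,(\PP^n)^\ast)$ under the canonical isomorphism and biduality swaps $\mathcal{U}_{X,X^\vee}$ with $\mathcal{U}_{X^\vee,X}$, both inclusions fall out of one symmetric description, with no dimension count and no genericity argument. Your route instead matches witnesses and computes $\dim(L^\vee\cap T_HX^\vee)$ exactly, via the two biduality inputs $R:=\bigl(T_{\bs\eta}\cone(X^\vee)\bigr)^\perp\subseteq T_{\bs x}\cone(X)$ and $\bs x\in R$. This is more work, but it is more quantitative: it exhibits the exact generic intersection dimension $k-c+1-i$ on the dual side (identifying $\PP(R)$ with the contact locus of $H$) and makes visible where the bound $i\leq k-c+1$ enters. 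Your symmetry argument for the reverse inclusion and the index bookkeeping are correct, and your treatment of the first assertion and of $\CH_i(X)^\ast=\CH_i(X^\ast)$ agrees with the paper's.

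The one step you must tighten is the genericity claim "for a generic witness, $\Lambda\cap T$ is a generic subspace of $T$ through $\bs x$, so $\dim(\Lambda\cap R)=1$". As stated this is circular-looking: $R$ is not a fixed subspace against which $\Lambda\cap T$ can be taken generic~-- it is determined by $H=\PP(\Lambda+T)$, which varies with $L$. The repair is a decoupling observation: $H$, and hence $R$, depends only on $\Lambda+T$, i.e.\ on the image of $\Lambda$ in $\mathbb{C}^{n+1}/T$, while $W:=\Lambda\cap T$ can be varied freely among $(i+1)$-dimensional subspaces of $T$ through $\bs x$ with $\Lambda+T$ held fixed; all such pairs arise from points of the incidence variety over $x$, which is irreducible. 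So first fix a generic $x$ and a generic $\Lambda+T$ (this fixes $H\in\Reg(X^\vee)$ and $R$), then take $W$ generic through $\bs x$ to get $\dim(W\cap R)=\max\{1,\,i+c-k\}=1$; since $R\subseteq T$ one has $\Lambda\cap R=W\cap R$. With this observation inserted, your dimension count, and hence your proof, is complete.
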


\begin{proof}
The equality $\CH_i(X)^\ast = \CH_i(X^\ast)$ follows immediately from the definitions.
Hence, we only have to show $\CH_i(X) = \CH_{\dim X - \codim X^\vee+1 -i}(X^\vee)$ to conclude the proof.

For a point $x \in \PP^n$ and a hyperplane $H \subseteq \PP^n$ containing $x$, we define $\mathcal{G}_{x,H}(l,\PP^n) := \lbrace L \in \Gr(l,\PP^n) \mid x \in L \subseteq H \rbrace$.
Under the canonical isomorphism of Grassmannians, we have $\mathcal{G}_{H,x}(n-l-1, (\PP^n)^\ast) = \mathcal{G}_{x,H}(l,\PP^n)$.
Moreover, we consider the following open subset of the conormal variety: $\mathcal{U}_{X,X^\vee} := \lbrace (x,H) \in \mathrm{Reg}(X) \times \mathrm{Reg}(X^\vee) \mid T_xX \subseteq H \rbrace$.
By biduality, we have that $\mathcal{U}_{X^\vee,X} = \lbrace (H,x) \mid (x,H) \in \mathcal{U}_{X,X^\vee} \rbrace$.
Since the condition $\dim(L \cap T_xX) \geq i$ in the definition of $\CH_i(X)$ is equivalent to $\dim(L + T_xX) \leq n-1$, which means that $L$ is contained in a tangent hyperplane at $x$, we get
\begin{align*}
\CH_i(X) &= \overline{\bigcup_{(x,H) \in \mathcal{U}_{X,X^\vee}} \mathcal{G}_{x,H} \left(l, \PP^n\right)} \\
&= \overline{\bigcup_{(H,x) \in \mathcal{U}_{X^\vee,X}} \mathcal{G}_{H,x}\left(n-l-1, (\PP^n)^\ast \right)}
= \CH_{\dim X - \codim X^\vee+1 -i} \left(X^\vee \right).
\end{align*}
\end{proof}

Note that Theorems~\ref{thm:degrees} and~\ref{thm:duality} give an alternate proof that the polar degrees of a projective variety and its dual are the same but in reversed order, as stated in the forth property of polar degrees in Section \ref{sec:degree}.

\begin{cor}(Dual Cayley Trick)
\label{cor:dual_cayley}
$\quad$\\ Using the projection $q: S(n-\dim X+i,n+1) \to \Gr(n-\dim X+i-1, \PP^n), \; A \mapsto \mathrm{proj}(\mathrm{rs}\, A)$
sending full rank matrices to their projective row space, we have
\begin{align*}
\overline{q^{-1} \left( \CH_i(X) \right)}
= \overline{p^{-1} \left( \CH_{\dim X - \codim X^\vee +1 -i} \left( (X^\vee)^\ast \right) \right)}
= \left( (\PP^{n-k+i-1})^\ast \times X^\vee \right)^\vee.
\end{align*}
\end{cor}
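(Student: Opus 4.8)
The plan is to chain together the two Cayley-trick constructions (primal from Proposition~\ref{prop:cayley_trick} and dual from the present corollary) with the duality statement of Theorem~\ref{thm:duality}. First I would set $k := \dim X$ and unwind the definition of the map $q$. A full-rank matrix $A \in S(n-k+i,n+1)$ has projective row space $\proj(\rs\, A)$, and since $\rs\, A = (\ker A)^\perp$, the fibre $q^{-1}(\CH_i(X))$ consists exactly of those matrices whose row space lies in $\CH_i(X)$. Passing to orthogonal complements identifies $\rs\, A$ with $\ker A$ of the transposed-complement matrix, so that $q$ factors through the orthogonal-complement isomorphism $\ast$ of Grassmannians composed with the projection $p$ onto kernels. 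Concretely, I would argue that $\overline{q^{-1}(\CH_i(X))} = \overline{p^{-1}(\CH_i(X)^\ast)}$, because sending a matrix to the matrix whose kernel is the orthogonal complement of the original row space is (up to the closure and the $GL$-action already quotiented out) the map realizing the $\ast$-isomorphism in Stiefel coordinates, exactly as described in the Preliminaries where primal and dual coordinates are swapped.

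Next I would invoke Theorem~\ref{thm:duality}, which gives $\CH_i(X)^\ast = \CH_{k-\codim X^\vee+1-i}((X^\vee)^\ast)$. Substituting this into the previous identity yields
\begin{align*}
\overline{q^{-1}(\CH_i(X))} = \overline{p^{-1}\left(\CH_{k-\codim X^\vee+1-i}((X^\vee)^\ast)\right)},
\end{align*}
which is precisely the first claimed equality. For the second equality I would apply the primal Cayley trick of Proposition~\ref{prop:cayley_trick} directly to the variety $Y := (X^\vee)^\ast$ with index $j := k-\codim X^\vee+1-i$. Proposition~\ref{prop:cayley_trick} states $\overline{p^{-1}(\CH_j(Y))} = ((\PP^{\dim Y - j} \times Y)^\vee)^\ast$, so the only remaining task is to check the bookkeeping: that $\dim Y - j$ equals $n-k+i-1$ and that the resulting Segre-dual matches the asserted form $((\PP^{n-k+i-1})^\ast \times X^\vee)^\vee$.

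The main obstacle, and where I would spend the most care, is this dimension and duality bookkeeping. Since $Y = (X^\vee)^\ast$ is the image of $X^\vee$ under the point-hyperplane identification, we have $\dim Y = \dim X^\vee = n - \codim X^\vee$, hence $\dim Y - j = (n-\codim X^\vee) - (k-\codim X^\vee+1-i) = n-k+i-1$, as needed. Thus Proposition~\ref{prop:cayley_trick} produces $((\PP^{n-k+i-1} \times (X^\vee)^\ast)^\vee)^\ast$ sitting in the appropriate matrix space. The subtle point is translating this expression—living on the $\ast$-image side—back into the dual matrix space where $((\PP^{n-k+i-1})^\ast \times X^\vee)^\vee$ naturally lives; here I would use that applying $\ast$ to one factor of a Segre product and to the ambient matrix space corresponds to transposition, so that $((\PP^{n-k+i-1} \times (X^\vee)^\ast)^\vee)^\ast$ and $((\PP^{n-k+i-1})^\ast \times X^\vee)^\vee$ describe the same subvariety of the matrix space once the factors are matched up correctly via \eqref{eq:identifyHPpoint}. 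I expect the remaining verification to be a routine but attention-demanding comparison of how the $\ast$-operation interacts with the Segre embedding and projective duality on each factor.
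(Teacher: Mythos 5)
Your proposal is correct and takes essentially the same route as the paper, whose proof is exactly the one-line combination of Proposition~\ref{prop:cayley_trick} and Theorem~\ref{thm:duality}; you fill in the same chain of identities (the pointwise relation $\ker A = (\rs\, A)^\perp$ giving $q^{-1}(\CH_i(X)) = p^{-1}(\CH_i(X)^\ast)$, then Theorem~\ref{thm:duality}, then the primal Cayley trick applied to $(X^\vee)^\ast$). One minor imprecision: the last compatibility step is not ``transposition'' but the fact that the coordinate identification of the matrix space with its dual restricts on the Segre product to the factorwise identifications, and that projective duality commutes with this identification because the standard bilinear form is symmetric; this does not affect the validity of your argument.
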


\begin{proof}
This follows from Proposition~\ref{prop:cayley_trick} and Theorem~\ref{thm:duality}.
\end{proof}

\begin{ex}
The Chow form of a curve of degree at least two in $\mathbb{P}^3$ is the Hurwitz form of its dual surface.
Thus the variety of the orthogonal complements of lines intersecting the curve is the variety of lines tangent to the dual surface embedded in $\PP^3$.
For example, the dual of the twisted cubic is the surface cut out by the discriminant of a cubic univariate polynomial. The Chow form of the twisted cubic in primal Pl\"ucker coordinates is the determinant of the Bezout matrix $B$:
\begin{align*}
B := \left( \begin{array}{ccc}
 p_{01}&p_{02}&p_{03} \\ 
 p_{02}&p_{03}+p_{12}&p_{13} \\ 
 p_{03}&p_{13}&p_{23}
\end{array} \right) 
\leftrightsquigarrow
\left( \begin{array}{ccc}
 q_{23}&-q_{13}&q_{12} \\ 
 -q_{13}&q_{03}+q_{12}&-q_{02} \\ 
 q_{12}&-q_{02}&q_{01}
\end{array} \right) ,
\end{align*}
and the Hurwitz form of the discriminant surface in primal Pl\"ucker coordinates is the determinant of the matrix on right. \hfill $\diamondsuit$
\end{ex}

\begin{ex}
\label{ex:hurwitzHurwitz}
The Hurwitz form of a general surface in $\mathbb{P}^3$ is the Hurwitz form of its dual surface.
Consider for example the self-dual Segre-variety $\mathbb{P}^1 \times \mathbb{P}^1$. Its Hurwitz form is the determinant of the matrix $\left( \begin{smallmatrix}
2 p_{02} & p_{12}+p_{03} \\ p_{12}+p_{03} & 2p_{13}
\end{smallmatrix} \right)$, which stays invariant with respect to the change of coordinates \eqref{eq:plucker_change}.

This phenomenon was also observed in Example~\ref{ex:runningFermatDual}: Theorem~\ref{thm:duality} explains why the Hurwitz form of the Fermat cubic surface $X \subseteq \mathbb{P}^3$ and the Hurwitz form of its dual surface $X^\vee$ of degree 12 agree, and why the second coisotropic hypersurface of $X$ is exactly $X^\vee$. Moreover, it follows that the second coisotropic hypersurface of $X^\vee$ is the Fermat cubic surface $X$. \hfill $\diamondsuit$
\end{ex}

\begin{ex}
In $\Gr(2,\PP^4)$, there are three cases for coisotropic hypersurfaces: Chow forms of curves, Hurwitz forms of surfaces, and second coisotropic forms of threefolds. On the other hand, there are just two cases for $\Gr(1,\PP^4)$, namely Chow forms of surfaces and Hurwitz forms of threefolds. The following table summarizes which forms coincide, depending on the dimensions of the variety $X$ and its dual $X^\vee$ (Chow forms of threefolds are omitted).
\begin{align*}
\begin{array}{r|c|c|c}
X \mid X^\vee & \text{curve} & \text{surface} & \text{threefold} \\ 
\hline
\text{curve} & & \CH_0(X) = \CH_0(X^\vee) & \CH_0(X) = \CH_1(X^\vee) \\ 
\hline
\text{surface} & \CH_0(X) = \CH_0(X^\vee) & \CH_0(X) = \CH_1(X^\vee) & \CH_0(X) = \CH_2(X^\vee) \\ 
 &  & \CH_1(X) = \CH_0(X^\vee) & \CH_1(X) = \CH_1(X^\vee) \\ 
\hline
\text{threefold} & \CH_1(X) = \CH_0(X^\vee) & \CH_1(X) = \CH_1(X^\vee) & \CH_1(X) = \CH_2(X^\vee) \\
 &  & \CH_2(X) = \CH_0(X^\vee) & \CH_2(X) = \CH_1(X^\vee)
\end{array}
\end{align*}

\vspace*{-9mm}
\hfill $\diamondsuit$
\end{ex}

\section{Hyperdeterminants Revisited}
\label{sec:hyperdet}
The purpose of this section is to derive and discuss the following result.

\begin{prop}
\label{prop:hyperdet}
The $i$-th coisotropic form of the Segre variety $\mathbb{P}^{n_1} \times \ldots \times \mathbb{P}^{n_d}$ in \linebreak[4] $\mathbb{P}^{(n_1+1)\cdots(n_d+1)-1}$, in primal Stiefel coordinates, coincides with the hyperdeterminant of \linebreak[4] format $(n_1 + \ldots + n_d-i+1) \times (n_1+1) \times \ldots \times (n_d+1)$. All hyperdeterminants arise in that manner.
\end{prop}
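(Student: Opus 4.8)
The plan is to read off both statements directly from the Cayley trick (Proposition~\ref{prop:cayley_trick}) together with the defining property of hyperdeterminants; no new geometry is needed, only a careful matching of definitions. Write $X := \PP^{n_1} \times \ldots \times \PP^{n_d} \subseteq \PP^n$ for the Segre variety, so that $n+1 = (n_1+1)\cdots(n_d+1)$ and $k := \dim X = n_1 + \ldots + n_d$. If $Q$ is the defining polynomial of $\CH_i(X)$ in primal Plücker coordinates, then by definition the $i$-th coisotropic form in primal Stiefel coordinates is $Q \circ \pl$, where $\pl$ sends a $(k-i+1)\times(n+1)$-matrix to its maximal minors. Proposition~\ref{prop:cayley_trick} identifies $V(Q\circ\pl) = \overline{p^{-1}(\CH_i(X))}$ with $\left((\PP^{k-i}\times X)^\vee\right)^\ast$ inside the matrix space $\PP(\mathbb{C}^{(k-i+1)\times(n+1)})$. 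Since $\PP^{k-i}\times X = \PP^{k-i}\times\PP^{n_1}\times\ldots\times\PP^{n_d}$ is again a Segre variety, the hyperdeterminant of format $(k-i+1)\times(n_1+1)\times\ldots\times(n_d+1)$ is, by definition, the defining polynomial of its dual $(\PP^{k-i}\times X)^\vee$. The operation $(\cdot)^\ast$ is here the standard identification of the matrix space with its dual via $\langle A, M\rangle = \sum_{i,j} a_{ij} m_{ij}$, under which that defining polynomial is unchanged as an element of $\mathbb{C}[a_{ij}]$. Hence $Q\circ\pl$ equals the hyperdeterminant, and because $k-i = n_1+\ldots+n_d-i$ this is precisely the claimed format.

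For the second statement I would reverse this construction. Fix an arbitrary nontrivial hyperdeterminant, say of format $(m_0+1)\times(m_1+1)\times\ldots\times(m_d+1)$. Recall that it is a nonconstant polynomial exactly when the dual of $\PP^{m_0}\times\ldots\times\PP^{m_d}$ is a hypersurface, which by the classical criterion (\cite[Ch.~14]{gkz}, equivalently formula~\eqref{eq:product_thm}) holds if and only if $m_j \le \sum_{l\ne j} m_l$ for every $j$. Since the hyperdeterminant is symmetric in its factors up to relabeling tensor indices, I may single out the factor $m_0$ and set $X := \PP^{m_1}\times\ldots\times\PP^{m_d}$ and $i := (m_1+\ldots+m_d) - m_0$, so that $\PP^{k-i}\times X = \PP^{m_0}\times\ldots\times\PP^{m_d}$. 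The inequality for $j=0$ gives $m_0 \le m_1+\ldots+m_d = k$, hence $i \ge 0$; and the Cayley trick together with Corollary~\ref{cor:dim} shows that $\CH_i(X)$ is a hypersurface exactly when the dual of this Segre variety is one, i.e.\ exactly when the hyperdeterminant exists (indeed, the inequalities for $j\ge 1$ are equivalent to $i \le k-\codim X^\vee+1$). Thus $i$ lies in the admissible range $\{0,\ldots,k-\codim X^\vee+1\}$, and the first statement identifies the $i$-th coisotropic form of $X$ with the hyperdeterminant of the given format.

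The substantive work is already contained in Proposition~\ref{prop:cayley_trick}, so the remaining difficulties are bookkeeping rather than conceptual. The points that demand genuine care are: verifying that the projection $p$ has equidimensional fibers (the $GL(k-i+1)$-orbits), so that $\overline{p^{-1}(\CH_i(X))}$ and $\CH_i(X)$ have equal codimension and "$\CH_i(X)$ is a hypersurface" really is equivalent to "$(\PP^{k-i}\times X)^\vee$ is a hypersurface"; checking that the $(\cdot)^\ast$ identification neither rescales nor permutes the entries $a_{ij}$ in a way that would alter the polynomial; and correctly aligning the two indexing conventions (the shift $k-i = n_1+\ldots+n_d-i$ and the role of the singled-out factor $m_0$). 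One should also treat the degenerate extremes separately, for instance formats with $d=1$ (ordinary determinants) or indices where the ambient Grassmannian collapses. I expect no serious obstacle beyond invoking the existence criterion for hyperdeterminants and keeping the dictionary between Segre factors, the index $i$, and the tensor format straight.
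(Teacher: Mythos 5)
Your proposal is correct and follows essentially the same route as the paper: the paper's own proof also consists of invoking the Cayley trick of Proposition~\ref{prop:cayley_trick} to identify the $i$-th coisotropic form in primal Stiefel coordinates with the defining polynomial of $(\PP^{k-i}\times X)^\vee$, and then noting that every hyperdeterminant arises this way from the Segre variety of rank-one tensors. Your write-up merely makes explicit the bookkeeping (the index dictionary $i = k - m_0$ and the equivalence of the hyperdeterminant existence criterion with the admissible range of $i$ via Corollary~\ref{cor:dim}) that the paper leaves implicit.
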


Chapter 14 of \cite{gkz} is devoted to the study of hyperdeterminants. They are defined as follows.
For $n_1, \ldots, n_d \geq 1$, the variety $ X:=\mathbb{P}^{n_1} \times \ldots \times \mathbb{P}^{n_d}$ characterizes all tensors of format $(n_1+1) \times \ldots \times (n_d+1)$ having rank at most one. Whenever the dual variety~$X^\vee$ is a hypersurface, its defining polynomial is called the \emph{hyperdeterminant of format \mbox{$(n_1+1) \times \ldots \times (n_d+1)$}}.
Analogously to Corollary~\ref{cor:dim}, one can derive the condition for $\codim X^\vee=1$:
recall that $\mu(Y) := \dim Y + \codim Y^\vee-1$ for every irreducible variety $Y \subseteq \mathbb{P}^n$. The equality \eqref{eq:product_thm} proven in \cite[Ch.~1, Thm.~5.5]{gkz} generalizes by induction to
\begin{align*}
\mu(X_1 \times \ldots \times X_d) = \max \lbrace \dim X_1 + \ldots + \dim X_d, \mu(X_1), \ldots, \mu(X_d) \rbrace.
\end{align*}
Hence $X^\vee$ is a hypersurface if and only if $2n_i \leq n_1 + \ldots + n_d$ for all $i = 1, \ldots, d$, and
\begin{align*}
\codim X^\vee = \max \left\lbrace 1, 2 \max \lbrace n_1, \ldots, n_d \rbrace - (n_1 + \ldots + n_d)+1 \right\rbrace.
\end{align*}

\begin{ex}
In the special case $d=2$ of matrices, the projectively dual variety $X^\vee$ is given by all matrices that do not have full rank. This variety is a hypersurface if and only if the matrices have square format $(n_1=n_2)$. In this case, the defining polynomial of $X^\vee$ is the usual determinant. Otherwise the codimension of $X^\vee$ equals $|n_2-n_1|+1$.
\hfill $\diamondsuit$
\end{ex}

\begin{proof}[Proof of Proposition~\ref{prop:hyperdet}]
Let $0 \leq i \leq n_1 + \ldots + n_d -\codim X^\vee +1$. By the Cayley trick in Proposition \ref{prop:cayley_trick}, the $i$-th coisotropic form of $X$ written in primal Stiefel coordinates is exactly the hyperdeterminant of format $(n_1 + \ldots + n_d-i+1) \times (n_1+1) \times \ldots \times (n_d+1)$. It is clear that all hyperdeterminants arise in that way as coisotropic forms of the varieties of tensors with rank at most one.
\end{proof}

\begin{rem}
Note that even the usual determinant of square matrices is given by the Chow form of $\mathbb{P}^n$.
Using the duality explained in Theorem~\ref{thm:duality} and Corollary~\ref{cor:dual_cayley}, the hyperdeterminants can also be characterized as the coisotropic forms (written in dual Stiefel coordinates) of the varieties of \emph{degenerate tensors}. \hfill $\diamondsuit$
\end{rem}

If all inequalities $2 n_i \leq n_1 + \ldots + n_d$ are satisfied (which means that $\codim X^\vee = 1$) such that at least one of them holds with equality, the hyperdeterminant is said to be of \emph{boundary format}. An example for this is the determinant of square matrices.  
The hyperdeterminants of boundary format can also be characterized in terms of coisotropic forms.
This is also studied in \cite[Ch.~14, Sec.~3C]{gkz}, but here this naturally and immediately follows from the duality studied in Theorem~\ref{thm:duality}.

\begin{cor}
The Chow form of the Segre variety $X = \mathbb{P}^{n_1} \times \ldots \times \mathbb{P}^{n_d}$ in primal Stiefel coordinates is a hyperdeterminant of boundary format, and~-- up to permuting the tensor format~-- all hyperdeterminants of boundary format arise in that manner.

If $\codim X^\vee \geq 2$, then the Chow form of $(X^\vee)^\ast$ in dual Stiefel coordinates is a hyperdeterminant of boundary format, and~-- up to permuting the tensor format~-- all hyperdeterminants of boundary format arise in that manner.
\end{cor}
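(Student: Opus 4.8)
The plan is to obtain both assertions by specialising Proposition~\ref{prop:hyperdet} to the two extreme coisotropic forms of a Segre variety, and then to verify the purely combinatorial boundary-format condition by inspecting the resulting tensor formats. Throughout I write $k := \dim X = n_1 + \ldots + n_d$ and $c := \codim X^\vee$.

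For the first statement, the Chow form is the $0$-th coisotropic form, so I would apply Proposition~\ref{prop:hyperdet} with $i = 0$: the Chow form of $X$ in primal Stiefel coordinates is the hyperdeterminant of format $(k+1) \times (n_1+1) \times \ldots \times (n_d+1)$. To see that this is boundary format I would note that the index of the leading factor is $k = n_1 + \ldots + n_d$, exactly the sum of the remaining indices; hence the inequality for that factor is the equality $2k = 2k$, while $2 n_j \le 2k$ holds for the others, so the dual Segre is a hypersurface with the boundary equality attained. Conversely, given any boundary format, I would relabel so that its equality factor is the leading one, with index $m$; then $m$ equals the sum of the remaining indices $m_1, \ldots, m_d$, and setting $n_j := m_j$ realises the given format as the Chow form of $\mathbb{P}^{n_1} \times \ldots \times \mathbb{P}^{n_d}$, so up to permuting the factors every boundary format occurs.

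For the second statement I would use Theorem~\ref{thm:duality}: taking $i = k-c+1$ there gives $\CH_0((X^\vee)^\ast) = \CH_{k-c+1}(X)^\ast$, and since $(\cdot)^\ast$ is precisely the passage between primal and dual coordinates, the Chow form of $(X^\vee)^\ast$ in dual Stiefel coordinates is the defining polynomial of $\CH_{k-c+1}(X)$ in primal Stiefel coordinates. As $k-c+1$ is the top admissible index, Proposition~\ref{prop:hyperdet} identifies it with the hyperdeterminant of format $c \times (n_1+1) \times \ldots \times (n_d+1)$. To check boundary format I would relabel so that $n_1 = \max_j n_j$; then $c = 2 n_1 - (n_1 + \ldots + n_d) + 1$ gives that the indices sum to $(c-1) + (n_1 + \ldots + n_d) = 2 n_1$, with the boundary equality $2 n_1 = 2 n_1$ attained at the first Segre factor and all other inequalities following from the maximality of $n_1$. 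For the converse, given a boundary format with equality index $\mu_1$ (the maximal one) and remaining indices $\mu_0, \mu_2, \ldots, \mu_e$, I would set $n_1 := \mu_1$, $n_j := \mu_j$ for $2 \le j \le e$, and discard $\mu_0$; using $\sum_{j} \mu_j = 2\mu_1$ one gets $c = \mu_0 + 1 \ge 2$ and $c-1 = \mu_0$, so the construction reproduces the discarded factor and hence the full format.

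The mathematical content here is light, and I expect the only real obstacle to be bookkeeping. The first delicate point is the coordinate convention: one must track that the $(\cdot)^\ast$ in Theorem~\ref{thm:duality} indeed converts the primal Stiefel coordinates of $\CH_{k-c+1}(X)$ into the dual Stiefel coordinates of the Chow form of $(X^\vee)^\ast$, which is exactly the identification of the orthogonal-complement isomorphism with the primal/dual coordinate change recorded in the Preliminaries. The second is keeping straight, in each of the two constructions, which tensor factor attains the boundary equality~-- the Chow linear-space factor $\mathbb{P}^k$ in the first statement, but one of the original Segre factors $\mathbb{P}^{n_1}$ in the second~-- so that the two realisations are genuinely distinct while both exhaust all boundary formats up to permutation. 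I would therefore present the argument as two explicit matchings of formats rather than by computing any hyperdeterminant.
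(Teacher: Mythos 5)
Your route is essentially the paper's own: the first statement via Proposition~\ref{prop:hyperdet} at $i=0$ (the paper dismisses this as ``clear''), the second via Theorem~\ref{thm:duality} at the top index $i=k-c+1$ followed by Proposition~\ref{prop:hyperdet}, and your coordinate bookkeeping is right~-- $\CH_{k-c+1}(X)^\ast = \CH_0((X^\vee)^\ast)$, and $(\cdot)^\ast$ is exactly the primal/dual coordinate switch. For every Segre variety with $d\geq 2$ factors this all goes through, and you carry out the format matching more explicitly than the paper does.

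There is, however, one genuine gap, and it is precisely the point the paper's proof goes out of its way to flag. The usual $(n+1)\times(n+1)$-determinant is a hyperdeterminant of boundary format, and in your converse for the second statement it arises only from the case $e=1$ of your construction, i.e.\ $X=\PP^{\mu_1}$ a single projective space ($d=1$, $\mu_0=\mu_1$). For this $X$ the hypothesis $\codim X^\vee = \mu_1+1 \geq 2$ holds, but $X^\vee = (\PP^{\mu_1})^\vee = \emptyset$: Theorem~\ref{thm:duality} cannot be invoked (there is no irreducible nonempty variety $X^\vee$, so $\CH_0((X^\vee)^\ast)$ does not exist in the sense of Definition~\ref{defn:higher_associated}), and the phrase ``Chow form of $(X^\vee)^\ast$'' is itself undefined. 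The paper resolves this by an explicit convention~-- the Chow form of the empty variety $(\PP^n)^\vee$ in dual Stiefel coordinates is \emph{declared} to be the $(n+1)\times(n+1)$-determinant~-- and states that the second part of the corollary relies on it. Without that convention, your argument proves the second statement only for $d\geq2$, and the claim that \emph{all} boundary-format hyperdeterminants arise in this way fails exactly at square formats. A smaller point in the same vein: in your boundary check for the format $c\times(n_1+1)\times\cdots\times(n_d+1)$, the inequality for the leading factor, $2(c-1)\leq 2n_1$, does not follow from the maximality of $n_1$ but from the trivial bound $n_1 \leq n_1+\cdots+n_d$; it degenerates to an equality precisely when $d=1$, which again signals the special case you need to treat separately.
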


\begin{proof}
The first part of this proposition is clear.
Note that the second part uses the convention that the Chow form of the empty variety $(\mathbb{P}^n)^\vee$ in dual Stiefel coordinates is the usual $(n+1)\times(n+1)$-determinant. 
If $X^\vee$ is not a hypersurface and $d \geq 2$, exactly two coisotropic forms of $X$ yield hyperdeterminants of boundary format:
the Chow form and the $(2 \cdot (n_1 + \ldots + n_d - \max \lbrace n_1, \ldots, n_d \rbrace))$-th coisotropic form, where~-- by Theorem~\ref{thm:duality}~-- the latter is the Chow form of $X^\vee$.
Hence, although $X^\vee$ is not defining a hyperdeterminant, the Chow form of $(X^\vee)^\ast$ in dual Stiefel coordinates coincides with the hyperdeterminant of boundary format $(2 \max \lbrace n_1, \ldots, n_d \rbrace - (n_1 + \ldots +n_d) +1) \times (n_1+1) \times \ldots \times (n_d+1)$. 
\end{proof}

\begin{rem}
Theorem 3.3 in Chapter 14 of \cite{gkz} shows that all hyperdeterminants of boundary format can be written as the usual determinant of a square matrix whose entries are linear forms in the tensor entries.
Hence, if $X^\vee$ is not a hypersurface, the Chow forms of $X$ and $X^\vee$ have determinantal representations in their Stiefel coordinates.

Analogously, if $X^\vee$ is a hypersurface and the corresponding hyperdeterminant is of boundary format, the Chow form of $X$ and the $(n_1 + \ldots + n_d)$-th coisotropic hypersurface of $X$ (which is just $X^\vee$) give hyperdeterminants of boundary format. These are the only two coisotropic hypersurfaces of $X$ with that property, and their defining polynomials in Stiefel coordinates have determinantal representations. Finally, if $X^\vee$ is a hypersurface and its hyperdeterminant is not of boundary format, the Chow form of $X$ is the only coisotropic form which yields a hyperdeterminant of boundary format. In all cases the Chow form of $X$ has a determinantal representation in primal Stiefel coordinates. \hfill $\diamondsuit$
\end{rem}

\begin{ex}
The variety $X = \mathbb{P}^1 \times \mathbb{P}^n$ of $2 \times (n+1)$-matrices of rank at most one is self-dual and it has three coisotropic hypersurfaces.
Hence, after the change of coordinates in \eqref{eq:plucker_change}, the Chow form of $X$ is the same as the second coisotropic form of $X$.

For $n=1$, the variety $X$ itself is a hypersurface, given by the $2 \times 2$-determinant. Therefore, its Chow form and its second coisotropic form are also the $2 \times 2$-determinant in their respective Pl\"ucker coordinates. As mentioned in Example~\ref{ex:hurwitzHurwitz}, the Hurwitz form of~$X$ is the determinant of $\left( \begin{smallmatrix}
2 p_{02} & p_{12}+p_{03} \\ p_{12}+p_{03} & 2p_{13}
\end{smallmatrix} \right)$, which leads after substitution by the $2 \times 2$-minors of a general $2 \times 4$-matrix to the hyperdeterminant of format $2 \times 2 \times 2$.

Analogously, the Chow form of $X$ in primal Stiefel coordinates is the hyperdeterminant of boundary format $3 \times 2 \times 2$. Hence, this hyperdeterminant can be written as the determinant of $\left(\begin{smallmatrix} p_{012} & p_{013} \\ p_{023} & p_{123} \end{smallmatrix}\right)$, where the $p_{ijk}$ are the $3 \times 3$-minors of a general $3 \times 4$-matrix. On the other hand, the $3 \times 2 \times 2$-hyperdeterminant has a determinantal representation: let $A,B \in \mathbb{C}^{3 \times 2}$ be the two slices of a general $3 \times 2 \times 2$-tensor in the last direction. The $3 \times 2 \times 2$-hyperdeterminant is the determinant of the $6 \times 6$-matrix $\left(\begin{smallmatrix}
A & B & 0 \\ 0 & A & B
\end{smallmatrix}\right)$, since by Laplace expansion in the first three rows this determinant is equal to $p_{012}p_{123}-p_{013}p_{023}$, where the $p_{ijk}$ are the minors of the $3 \times 4$-matrix $(A \; B)$.
Moreover, the $2 \times 2 \times 3$-hyperdeterminant is also given by the second coisotropic form of $\mathbb{P}^1 \times \mathbb{P}^2$ in primal Stiefel coordinates, or equivalently by the Chow form of $\mathbb{P}^1 \times \mathbb{P}^2$ in dual Stiefel coordinates. Thus, the $2 \times 2 \times 3$-hyperdeterminant can be obtained by substituting the $2 \times 2$-minors of the general $6 \times 2$-matrix $\left( \begin{smallmatrix} A \\ B \end{smallmatrix} \right)$ into the Chow form
\begin{align*}
&q_{13}q_{14}q_{24}-q_{03}q_{14}q_{25}-q_{12}q_{14}q_{25}+q_{02}q_{15}q_{25}-q_{03}q_{14}q_{34}+q_{01}q_{34}^2+q_{03}q_{04}q_{35}\\-&q_{02}q_{05}q_{35}+q_{02}q_{14}q_{35}-q_{01}q_{24}q_{35}+q_{12}^2q_{45}-q_{02}q_{13}q_{45}-q_{01}q_{23}q_{45}.
\hskip \textwidth minus \textwidth \diamondsuit
\end{align*}

\vspace*{-9mm}
\hfill $\diamondsuit$
\end{ex}

\section{The Cayley Variety}
\label{sec:pluecker}
Chow forms of space curves and Hurwitz forms of surfaces in $\mathbb{P}^3$~-- which are all cases of coisotropic hypersurfaces in $\Gr(1,\PP^3)$~-- were already studied by Cayley \cite{cayley2}.
Therefore the variety $\mathcal{C}(l,d,\PP^n)$ of all coisotropic forms of degree $d$ in the coordinate ring of $\Gr(l,\PP^n)$ is called \emph{Cayley variety} in the following.
Its subvariety of all Chow forms was introduced by Chow and van der Waerden~\cite{chow} and is called \emph{Chow variety}.
The problem of recognizing the Chow forms among all coisotropic forms is addressed in \cite[Ch.~4, Sec.~3]{gkz}.
This goes already back to Green and Morrison~\cite{green_morr}, who gave explicit equations for the Chow variety.

In \cite{our_computation}, the vanishing ideals of the Cayley variety and the Chow variety of hypersurfaces of degree two in $\Gr(1,\PP^3)$ were computed.
It was found that $\mathcal{C}(1,2,\PP^3)$ has degree $92$ and codimension $10$ in a $19$-dimensional projective space. Moreover, the prime decomposition of its radical ideal into Chow forms of curves and Hurwitz forms of surfaces is explicitly computed. In particular, this decomposition led to the interesting observation that every Chow form of a plane conic in $\mathbb{P}^3$ is already contained in the Zariski closure of all Hurwitz forms of surfaces in $\mathbb{P}^3$.

In contrast to the previous sections of this article, where hypersurfaces in Grassmannians were mainly studied in affine or Stiefel coordinates, the computation in \cite{our_computation} used a differential characterization of coisotropy in Pl\"ucker coordinates, which goes also back to Cayley.
He had already realized that a hypersurface $\Sigma \subseteq \Gr(1,\PP^3)$ with defining polynomial $Q$ in Pl\"ucker coordinates is coisotropic if and only if the following polynomial vanishes everywhere on $\Sigma$:
\begin{align*}
	\forall L \in \Sigma: 
	\left(\frac{\partial Q}{\partial p_{01}} \cdot \frac{\partial Q}{\partial p_{23}}
	- \frac{\partial Q}{\partial p_{02}} \cdot \frac{\partial Q}{\partial p_{13}}
	+ \frac{\partial Q}{\partial p_{03}} \cdot \frac{\partial Q}{\partial p_{12}} \right)(L)= 0.
\end{align*}
This follows immediately from the affine or Stiefel characterization of coisotropy given in Propositions \ref{prop:affine_characterization} and \ref{prop:stiefel_characterization}. 
In this section, we provide a generalization of Cayley's result to $\Gr(1,\PP^n)$ for $n \geq 3$, and describe how this can be used to compute the vanishing ideal of the Cayley variety $\mathcal{C}(1,d,\PP^n)$.

For a homogeneous irreducible polynomial $Q$ in dual Pl\"ucker coordinates of $\Gr(1,\PP^n)$ and for $0 \leq j < k < i < m \leq n$, define
\begin{align*}
R^Q_{jkim} := \frac{\partial Q}{\partial q_{jk}} \cdot \frac{\partial Q}{\partial q_{im}}
        - \frac{\partial Q}{\partial q_{ji}} \cdot \frac{\partial Q}{\partial q_{km}}
        + \frac{\partial Q}{\partial q_{jm}} \cdot \frac{\partial Q}{\partial q_{ki}}.
\end{align*}
Note that this polynomial of degree $2(\deg Q - 1)$ is a differential version of the usual Pl\"ucker relations. To allow permutations of the indices, define $R^Q_{\pi(jkim)} := \mathrm{sgn}(\pi)R^Q_{jkim}$ for $\pi \in S_4$.

\begin{thm}
\label{thm:plueckerChar}
Let $n \geq 3$, and let $\Sigma \subseteq \Gr(1,\PP^n)$ be an irreducible hypersurface, given by a homogeneous polynomial $Q$ in dual Pl\"ucker coordinates. Then $\Sigma$ is coisotropic if and only if for all $0 \leq i < m \leq n$, the following polynomial in dual Pl\"ucker coordinates vanishes everywhere on $\Sigma$:
\begin{align*}
	\forall L \in \Sigma: \left( \sum \limits_{0 \leq j < k \leq n, j,k \notin \lbrace i,m \rbrace}
        q_{jk} R^Q_{jkim} \right) (L) = 0.
\end{align*}
\end{thm}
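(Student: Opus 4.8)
The plan is to reduce the statement to the Stiefel characterization of coisotropy (Proposition~\ref{prop:stiefel_characterization}) and then to prove a single polynomial identity expressing the polynomials $T_{im} := \sum_{j<k,\, j,k\notin\{i,m\}} q_{jk} R^Q_{jkim}$ from the theorem in terms of the $2\times 2$ minors of a Jacobian. First I would record that the dual-coordinate version of Proposition~\ref{prop:stiefel_characterization} applies here: writing a line as the projective row space of a $2\times(n+1)$ matrix $B$ with dual Stiefel entries $b_{rs}$ ($r\in\{1,2\}$, $0\le s\le n$), the hypersurface $\Sigma$ is coisotropic if and only if all $2\times 2$ minors of $J_{Q\circ\pl}(B)$ vanish for every $B$ with $Q(\pl(B))=0$. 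Equivalently, one passes to $\Sigma^\ast\subseteq\Gr(n-2,\PP^n)$, where the hypothesis $l=n-2\ge 1$ of Proposition~\ref{prop:stiefel_characterization} is exactly the assumption $n\ge 3$, and invokes Theorem~\ref{thm:duality} to transfer coisotropy between $\Sigma$ and $\Sigma^\ast$.

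Next I would compute these minors directly in dual Pl\"ucker coordinates. Writing $q_{jk}=b_{1j}b_{2k}-b_{1k}b_{2j}$ and abbreviating $\partial_{ab}:=\partial Q/\partial q_{ab}$ (extended skew-symmetrically, with $\partial_{aa}=0$), the chain rule yields the two Jacobian rows $\partial(Q\circ\pl)/\partial b_{1s}=\sum_{k}\partial_{sk}\,b_{2k}$ and $\partial(Q\circ\pl)/\partial b_{2s}=-\sum_k\partial_{sk}\,b_{1k}$. Expanding the $2\times 2$ minor on columns $i,m$ and collapsing the resulting bilinear expressions via $b_{1k}b_{2l}-b_{1l}b_{2k}=q_{kl}$ gives
\begin{align*}
M_{im} \;=\; \sum_{k,l} \frac{\partial Q}{\partial q_{ik}}\,\frac{\partial Q}{\partial q_{ml}}\, q_{kl}.
\end{align*}
This is already a polynomial in the Pl\"ucker coordinates, so by the first step coisotropy of $\Sigma$ is equivalent to the vanishing of all $M_{im}$ (for $i<m$) on $\Sigma$.

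The core step is then an algebraic identity between $T_{im}$ and $M_{im}$. Since $R^Q_{jkim}$ is skew-symmetric in $j,k$, I would rewrite $T_{im}=\tfrac12\sum_{j,k\notin\{i,m\}}q_{jk}R^Q_{jkim}$ and expand the three terms of $R^Q_{jkim}=\partial_{jk}\partial_{im}-\partial_{ji}\partial_{km}+\partial_{jm}\partial_{ki}$. The first term contributes $\tfrac12\,\partial_{im}\sum_{j,k\notin\{i,m\}}q_{jk}\partial_{jk}$, while the last two terms coincide after the swap $j\leftrightarrow k$ and, together with the factor $\tfrac12$, contribute $-\sum_{j,k\notin\{i,m\}}\partial_{ji}\partial_{km}\,q_{jk}$, which is the restriction of $M_{im}=\sum_{j,k}\partial_{ji}\partial_{km}\,q_{jk}$ to summation indices avoiding $\{i,m\}$. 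Completing both restricted sums to full sums introduces boundary terms (where a summation index hits $i$ or $m$) and the partial row-sums $\sigma_a:=\sum_k q_{ak}\partial_{ak}$, while Euler's relation $\sum_{a<b}q_{ab}\partial_{ab}=\deg(Q)\,Q$ controls the full diagonal-type sum. A careful bookkeeping computation shows that all occurrences of $\sigma_i,\sigma_m$ and the leftover term $q_{im}\partial_{im}^2$ cancel between the two parts, leaving the clean identity
\begin{align*}
T_{im} \;=\; \deg(Q)\cdot Q\cdot \frac{\partial Q}{\partial q_{im}} \;-\; M_{im},
\end{align*}
valid already as a polynomial identity in $\mathbb{C}[q_{ab}]$, with no Pl\"ucker relation needed.

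Finally, since $Q$ vanishes identically on $\Sigma$, the identity restricts to $T_{im}=-M_{im}$ on $\Sigma$. Hence the $T_{im}$ vanish on $\Sigma$ for all $0\le i<m\le n$ if and only if the minors $M_{im}$ do, i.e.\ if and only if $\Sigma$ is coisotropic; this proves both implications simultaneously. I expect the main obstacle to lie in the core identity: tracking signs under the skew-symmetric conventions for $q_{ab}$ and $\partial_{ab}$, and correctly handling by inclusion--exclusion the boundary terms where a summation index equals $i$ or $m$, so that the partial Euler sums $\sigma_i,\sigma_m$ cancel exactly and only the $\deg(Q)\cdot Q\cdot\partial Q/\partial q_{im}$ term survives off $\Sigma$. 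As a consistency check, for $n=3$ each $T_{im}$ reduces to $q_{jk}\,R^Q_{jkim}$ with $\{j,k\}$ the complementary pair, so its vanishing on $\Sigma$ forces $R^Q_{jkim}=0$, recovering Cayley's classical criterion.
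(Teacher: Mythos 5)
Your proposal is correct and follows essentially the same route as the paper's proof: reduce to the dual-coordinate version of Proposition~\ref{prop:stiefel_characterization} (where the hypothesis $l\geq 1$ accounts for $n\geq 3$), compute the $2\times 2$ minors of $J_{Q\circ\pl}$ for a $2\times(n+1)$ matrix via the chain rule, and identify the minor on columns $i,m$ with $\deg(Q)\,Q\cdot\frac{\partial Q}{\partial q_{im}} - T_{im}$, so that Euler's relation kills the first term on $\Sigma$. The only cosmetic difference is that you package the key cancellation as an identity in the free ring $\mathbb{C}[q_{ab}]$ before substituting, whereas the paper performs the same bookkeeping directly in Stiefel coordinates.
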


\begin{proof}
	This proof relies on Proposition~\ref{prop:stiefel_characterization}.
	Let again $\pl$ be the map that sends a matrix to its maximal minors, such that $\pl(A)_{ij} = a_{0i}a_{1j}-a_{0j}a_{1i}$ denotes the minor given by the $i$-th and $j$-th column of a $2 \times (n+1)$-matrix $A$.
	For shorter notation, write $\beta_{ij}:= \frac{\partial Q}{\partial q_{ij}}(\pl(\cdot))$, as well as $q_{ij} = \pl(\cdot)_{ij}$. This will be used to compute the $(2 \times 2)$-minors of $J_{Q \circ \pl}$. For this, pick two columns with indices $0 \leq i<m \leq n$. The set of remaining column indices is denoted by $S := \lbrace 0, \ldots, n \rbrace \setminus \lbrace i,m \rbrace$. The chain rule for partial derivatives gives
\begin{align*}
	\frac{\partial (Q \circ \pl)}{\partial a_{0i}} = - \sum \limits_{j \in S} \beta_{ji} a_{1j} - \beta_{mi}a_{1m}, \quad\quad\quad 
	\frac{\partial (Q \circ \pl)}{\partial a_{1m}} = \sum \limits_{j \in S} \beta_{jm} a_{0j} + \beta_{im}a_{0i}.
\end{align*}
Hence, the $(2 \times 2)$-minor of $J_{Q \circ \pl}$ given by the columns $i$ and $m$ equals
\begin{align*}
	&\frac{\partial (Q \circ \pl)}{\partial a_{0i}} \cdot \frac{\partial (Q \circ \pl)}{\partial a_{1m}} - \frac{\partial (Q \circ \pl)}{\partial a_{1i}} \cdot \frac{\partial (Q \circ \pl)}{\partial a_{0m}} \\
	= &a_{0i}a_{1m}\beta_{im}^2 + \sum \limits_{j \in S} a_{0j}a_{1m} \beta_{im}\beta_{jm} - \sum \limits_{j \in S} a_{0i}a_{1j}\beta_{im}\beta_{ji}
	- \sum \limits_{j,k \in S}a_{0k}a_{1j}\beta_{ji}\beta_{km} \\
	&-a_{0m}a_{1i}\beta_{im}^2 + \sum \limits_{j \in S} a_{0j}a_{1i} \beta_{im}\beta_{ji} - \sum \limits_{j \in S} a_{0m}a_{1j}\beta_{im}\beta_{jm}
	+ \sum \limits_{j,k \in S}a_{0j}a_{1k}\beta_{ji}\beta_{km} \\
	=&q_{im}\beta_{im}^2 + \sum \limits_{j \in S} q_{jm} \beta_{im}\beta_{jm} 
	+ \sum \limits_{j \in S} q_{ji}\beta_{im}\beta_{ji}
	+ \sum \limits_{j,k \in S, j \neq k} q_{jk} \beta_{ji}\beta_{km} \\
	=&\beta_{im} \left( \sum \limits_{0 \leq j < k \leq n} q_{jk}\beta_{jk} - \sum \limits_{j,k \in S, j < k} q_{jk}\beta_{jk} \right)
	+ \sum \limits_{j,k \in S, j < k}q_{jk} \beta_{ji}\beta_{km}
	- \sum \limits_{j,k \in S, j < k}q_{jk} \beta_{ki}\beta_{jm}\\
	=& \beta_{im} \sum \limits_{0 \leq j < k \leq n} q_{jk}\beta_{jk}  -\sum \limits_{j,k \in S, j < k} q_{jk} \left( \beta_{jk}\beta_{im} - \beta_{ji}\beta_{km}+\beta_{jm}\beta_{ki} \right)\\
	=& \beta_{im} \sum \limits_{0 \leq j < k \leq n} q_{jk}\beta_{jk}-\sum \limits_{j,k \in S, j < k} q_{jk} R^Q_{jkim}.
\end{align*}
Since $Q$ is homogeneous, we have for all $N \in V(Q \circ \pl)$ that $ \sum \limits_{0 \leq j < k \leq n} (q_{jk}\beta_{jk})(N)=0 $.
Now the theorem follows from Proposition \ref{prop:stiefel_characterization}.
\end{proof}

Note that for $n=3$, the above theorem yields exactly Cayley's differential characterization: $R^Q_{0123}(L) = 0$ for all $L \in \Sigma$.
For $n=4$, one gets the following 10 polynomials in dual Pl\"ucker coordinates:
\begin{align}
\label{eq:diffCayleyG(2,5)}
\begin{split}
&q_{01}R_{0134}^Q + q_{02}R_{0234}^Q + q_{12}R_{1234}^Q, \quad\quad\quad \;\;\; q_{01}R_{0124}^Q - q_{03}R_{0234}^Q - q_{13}R_{1234}^Q,\\
&q_{01}R_{0123}^Q + q_{04}R_{0234}^Q + q_{14}R_{1234}^Q, \quad\quad\quad -q_{02}R_{0124}^Q - q_{03}R_{0134}^Q + q_{23}R_{1234}^Q,\\
-&q_{02}R_{0123}^Q + q_{04}R_{0134}^Q - q_{24}R_{1234}^Q, \quad\quad\quad \;\;\; q_{03}R_{0123}^Q + q_{04}R_{0124}^Q + q_{34}R_{1234}^Q,\\
&q_{12}R_{0124}^Q + q_{13}R_{0134}^Q + q_{23}R_{0234}^Q, \quad\quad\quad \;\;\; q_{12}R_{0123}^Q - q_{14}R_{0134}^Q - q_{24}R_{0234}^Q,\\
-&q_{13}R_{0123}^Q - q_{14}R_{0124}^Q + q_{34}R_{0234}^Q, \quad\quad\quad \;\;\; q_{23}R_{0123}^Q + q_{24}R_{0124}^Q + q_{34}R_{0134}^Q.
\end{split}
\end{align}

Theorem~\ref{thm:plueckerChar} gives a method to compute the vanishing ideal of the Cayley variety $\mathcal{C}(1,d,\PP^n)$.
For positive integers $N$ and $D$, denote by $\left( \left( \begin{smallmatrix} N \\ D \end{smallmatrix} \right)\right) :=  \left( \begin{smallmatrix} N+D-1 \\ D \end{smallmatrix} \right)$ the multiset coefficient, i.e., the number of monomials of degree $D$ in $N$ variables.
\begin{cor}
	Consider the Cayley variety $\mathcal{C}(1,d,\PP^n) \subseteq \mathbb{P} (\mathbb{C}[\Gr(1,\PP^n)]_d)$, and let $\bs{c}$ be a vector of homogeneous coordinates on $\mathbb{P} (\mathbb{C}[\Gr(1,\PP^n)]_d)$.
	There are $ \left( \begin{smallmatrix} n+1 \\ 2 \end{smallmatrix} \right)$ matrices of size
	\begin{align}
		\label{eq:matrix_size}
		\left( \left( \begin{array}{c} \left( \begin{array}{c} n+1 \\ 2  \end{array} \right) \\ 2d-1 \end{array} \right) \right)
		\times 
		\left[1+  \left( \left( \begin{array}{c} \left( \begin{array}{c} n+1 \\ 2  \end{array} \right) \\ d-1 \end{array} \right) \right) +
		\left( \begin{smallmatrix} n+1 \\ 4 \end{smallmatrix} \right) \cdot \left( \left( \begin{array}{c} \left( \begin{array}{c} n+1 \\ 2  \end{array} \right) \\ 2d-3 \end{array} \right) \right) \right]
	\end{align}
	whose entries are polynomials in $\bs{c}$, such that the ideal generated by the maximal minors of these matrices defines~-- up to saturation~-- the Cayley variety $\mathcal{C}(1,d,\PP^n)$.
	Moreover, these minors have degree 
	\begin{align}
		\label{eq:minors_degree}
		2+ \left( \left( \begin{array}{c} \left( \begin{array}{c} n+1 \\ 2  \end{array} \right) \\ d-1 \end{array} \right) \right)
	\end{align}
	in the $\dim(\mathbb{C}[\Gr(1,\PP^n)]_d)$ many unknowns $\bs{c}$.
\end{cor}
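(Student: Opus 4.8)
The plan is to read this corollary as a direct linear-algebra reformulation of Theorem~\ref{thm:plueckerChar}. By that theorem, the hypersurface $\Sigma=V(Q)\subseteq\Gr(1,\PP^n)$ cut out by a homogeneous degree-$d$ polynomial $Q$ in dual Pl\"ucker coordinates is coisotropic if and only if, for every pair $0\le i<m\le n$, the degree-$(2d-1)$ polynomial
\begin{align*}
P^Q_{im} := \sum_{\substack{0\le j<k\le n\\ j,k\notin\{i,m\}}} q_{jk}\,R^Q_{jkim}
\end{align*}
vanishes identically on $\Sigma$. The first step is to turn \emph{vanishing on $\Sigma$} into an algebraic membership condition. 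Since a coisotropic form is irreducible and the Pl\"ucker coordinate ring $A:=\mathbb{C}[q_{jk}]/I_{\mathrm{Pl}}$ of $\Gr(1,\PP^n)$ is a unique factorization domain, the vanishing ideal of the irreducible hypersurface $\Sigma$ is the principal ideal generated by $Q$. Hence $P^Q_{im}$ vanishes on $\Sigma$ exactly when $P^Q_{im}\in(Q)\subseteq A$, which, lifted to the polynomial ring and taken in degree $2d-1$, reads
\begin{align*}
P^Q_{im}\in W:=Q\cdot\mathbb{C}[q_{jk}]_{d-1}+(I_{\mathrm{Pl}})_{2d-1}.
\end{align*}

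Next I would linearize this membership. The space $W$ is spanned by two explicit families: the products $Q\cdot m$ over the $\left(\left(\begin{smallmatrix}\binom{n+1}{2}\\ d-1\end{smallmatrix}\right)\right)$ monomials $m$ of degree $d-1$, and the products $\mathrm{Pl}_\alpha\cdot m'$ over the $\binom{n+1}{4}$ three-term Pl\"ucker quadrics $\mathrm{Pl}_\alpha$ and the $\left(\left(\begin{smallmatrix}\binom{n+1}{2}\\ 2d-3\end{smallmatrix}\right)\right)$ monomials $m'$ of degree $2d-3$. Expanding $P^Q_{im}$ and all of these spanning vectors in the monomial basis of $\mathbb{C}[q_{jk}]_{2d-1}$, whose cardinality $\left(\left(\begin{smallmatrix}\binom{n+1}{2}\\ 2d-1\end{smallmatrix}\right)\right)$ is the number of rows, produces for each pair $(i,m)$ a matrix $M_{im}$ of the size displayed in~\eqref{eq:matrix_size}; there are $\binom{n+1}{2}$ such pairs, hence $\binom{n+1}{2}$ matrices, all sharing the block $B$ of columns coming from the $Q\cdot m$ and the $\mathrm{Pl}_\alpha\cdot m'$, and differing only in the single leading column $P^Q_{im}$. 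The membership $P^Q_{im}\in W$ holds precisely when appending this leading column to $B$ does not raise the rank, i.e.\ when the maximal minors of $M_{im}$ vanish, provided $B$ attains its generic rank.

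For the degrees I would track the three kinds of entries: the leading column $P^Q_{im}$ is quadratic in the coefficient vector $\bs{c}$ of $Q$, since each $R^Q_{jkim}$ is a sum of products of two first derivatives of $Q$; the columns $Q\cdot m$ are linear in $\bs{c}$; and the Pl\"ucker columns $\mathrm{Pl}_\alpha\cdot m'$ have constant entries. A maximal minor therefore picks up degree $2$ from the single leading column, degree $1$ from each of the $\left(\left(\begin{smallmatrix}\binom{n+1}{2}\\ d-1\end{smallmatrix}\right)\right)$ columns $Q\cdot m$, and degree $0$ from the Pl\"ucker columns, for a total degree $2+\left(\left(\begin{smallmatrix}\binom{n+1}{2}\\ d-1\end{smallmatrix}\right)\right)$ in $\bs{c}$, which is exactly~\eqref{eq:minors_degree}.

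Finally I would assemble the ideal generated by the maximal minors of all $\binom{n+1}{2}$ matrices and argue that it cuts out $\mathcal{C}(1,d,\PP^n)$ up to saturation. The vanishing locus of these minors is the union of the genuine locus, where $P^Q_{im}\in W$ for every $(i,m)$ --- which, together with irreducibility of $Q$, characterizes coisotropic forms by the first step and is thus dense in the Cayley variety --- and of extraneous components supported where the block $B$ degenerates (the multiplication-by-$Q$ map or the chosen Pl\"ucker generators dropping rank) or where $Q$ fails to be irreducible. I expect the main obstacle to be exactly this last bookkeeping: one must check that over the open locus of irreducible $Q$ the block $B$ has its expected rank, so that vanishing of the maximal minors is equivalent to membership $P^Q_{im}\in W$, and that saturation strips away precisely the spurious lower-rank components, leaving the Cayley variety itself.
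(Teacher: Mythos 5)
Your proposal is correct and takes essentially the same route as the paper's proof: Theorem~\ref{thm:plueckerChar}, translation of vanishing on $\Sigma$ into membership in $Q\cdot\mathbb{C}[q_{jk}]_{d-1}+(I_{\mathrm{Pl}})_{2d-1}$, the same coefficient matrices $M_{i,m}$ in the degree-$(2d-1)$ monomial basis (leading column quadratic in $\bs{c}$, the $Q\cdot m$ columns linear in $\bs{c}$, the Pl\"ucker columns constant), and the same size and degree counts. The two places where you deviate are minor and in your favor: your UFD argument shows that radicality of $(Q)+I_{\mathrm{Pl}}$ is automatic for irreducible $Q$, a fact the paper only assumes and explicitly lists as a caveat, while the obstacle you flag at the end---that the block $B$ must attain full column rank for vanishing of the maximal minors to be equivalent to membership---is exactly the degeneration (kernel vectors of the form $(0,\bs{a},\bs{b})$) that the paper's prescribed saturation by the maximal minors of $B$ is intended to remove.
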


\begin{proof}
	Let $Q$ be a general homogeneous polynomial of degree $d$ in the dual Pl\"ucker coordinates $q_{ij}$ of $\Gr(1,\PP^n)$. Denote the coefficient vector of the polynomial $Q$ by $\bs{c}$. The entries of $\bs{c}$ serve as homogeneous coordinates on $ \mathbb{P} (\mathbb{C}[\Gr(1,\PP^n)]_d)$, although~-- due to the Pl\"ucker relations~-- they are not independent unknowns.
The characterization in Theorem~\ref{thm:plueckerChar} states that the equation
\begin{align*}
C_{i,m} := \sum \limits_{0 \leq j < k \leq n, j,k \notin \lbrace i,m \rbrace}
        q_{jk} R^Q_{jkim}
\end{align*}
vanishes everywhere on the hypersurface of $\Gr(1,\PP^n)$ defined by $Q$, for all $0 \leq i < m \leq n$. Equivalently, the polynomial $C_{i,m}$ is contained in the radical of the ideal generated by $Q$ and the Pl\"ucker relations. Under the assumption that this ideal is already radical, we get the condition 
\begin{align}
\label{eq:cayleyCondition}
C_{i,m} - F^{(d-1)} \cdot Q - \sum \limits_{0 \leq \alpha < \beta < \gamma < \delta \leq n} G^{(2d-3)}_{\alpha \beta \gamma \delta} \cdot \mathcal{R}_{\alpha \beta \gamma \delta} =0,
\end{align}
where $\mathcal{R}_{\alpha \beta \gamma \delta}$ denotes the quadratic Pl\"ucker relation $q_{\alpha \beta}q_{\gamma \delta}-q_{\alpha \gamma}q_{\beta \delta} + q_{\alpha \delta}q_{\beta \gamma}$, and $G^{(2d-3)}_{\alpha \beta \gamma \delta}$ and $F^{(d-1)}$ are homogeneous polynomials of degree $2d-3$ and $d-1$, respectively. Let~$\bs{a}$ denote the coefficient vector of $F^{(d-1)}$, and let $\bs{b}$ denote the vector of all coefficients of all $G^{(2d-3)}_{\alpha \beta \gamma \delta}$.
The coefficient of each monomial of \eqref{eq:cayleyCondition}, where the variables are the Pl\"ucker coordinates $q_{ij}$, has quadratic terms in $\bs{c}$ (coming from $C_{i,m}$), multilinear terms in $\bs{a}$ and~$\bs{c}$ (coming from $F^{(d-1)} \cdot Q$), and linear terms in $\bs{b}$ (coming from $\sum G^{(2d-3)}_{\alpha \beta \gamma \delta} \cdot \mathcal{R}_{\alpha \beta \gamma \delta}$). Hence, we can represent such a coefficient as a vector:
the quadratic terms in $\bs{c}$ are the first entry. For each coefficient in $\bs{a}$ we add an entry, namely the corresponding linear form in~$\bs{c}$. Finally, we add the constant factor of each coefficient in $\bs{b}$.

In this way, we get a vector for each monomial in \eqref{eq:cayleyCondition}. 
Let $M_{i,m}$ be the matrix whose rows are given by these vectors. To sum up, the rows of $M_{i,m}$ are indexed by the monomials of \eqref{eq:cayleyCondition}, its columns are indexed by the entries in the vector $(1,\bs{a},\bs{b})$, and the entries of $M_{i,m}$ are (at most quadratic) polynomials in $\bs{c}$:
the first column contains quadrics, the columns corresponding to $\bs{a}$ consist of linear forms, and the remaining columns (corresponding to $\bs{b}$) contain constants.
In particular, the matrix $M_{i,m}$ has size~\eqref{eq:matrix_size}, and its maximal minors have degree~\eqref{eq:minors_degree} in $\bs{c}$.
The condition \eqref{eq:cayleyCondition} is equivalent to that the vector $(1,\bs{a},\bs{b})$ is contained in the kernel of the matrix $M_{i,m}$. Hence, for all $0 \leq i < m \leq n$, the maximal minors of $M_{i,m}$ give basic equations for the vanishing ideal of the Cayley variety $\mathcal{C}(1,d,\PP^n)$, but one still has to do some careful computational work to compute the actual vanishing ideal. There are three reasons for this.
First, the maximal minors of~$M_{i,m}$ also capture vectors in the kernel of $M_{i,m}$ that are of the form $(0,\bs{a},\bs{b})$. Thus one still has to saturate by minors of the matrix that is obtained by deleting the first column from $M_{i,m}$.
Secondly, we assumed the ideal generated by $Q$ and the Pl\"ucker relations to be radical. Therefore, this method might not characterize all coisotropic forms. 
Finally, the maximal minors of the matrices $M_{i,m}$ already lead to extraneous factors that arise since Theorem~\ref{thm:plueckerChar} requires $Q$ to be irreducible. In particular, all squares trivially satisfy the condition \eqref{eq:cayleyCondition}.
\end{proof}

\begin{ex}
The above method was explicitly computed in \cite{our_computation} for the Cayley variety $\mathcal{C}(1,2,\PP^3)$ of quadratic coisotropic forms in $\Gr(1,\PP^3)$. In this case, condition \eqref{eq:cayleyCondition} reduces to
$R_{0123}^Q - s \cdot Q - t \cdot \mathcal{R}_{0123}$,
for constants $s$ and $t$. We get only one matrix $M$ with~$3$ columns and $21$ rows. This matrix is given explicitly in Figure 1 of \cite{our_computation} (but with columns in reversed order as described here).
In this case, we do not need to compute saturations, since it cannot happen that the kernel of $M$ contains vectors of the form $(0,\bs{a},\bs{b})$. By Proposition~1 of \cite{our_computation}, the $3 \times 3$-minors of $M$ form the vanishing ideal of the Cayley variety $\mathcal{C}(1,2,\PP^3)$, up to the extraneous factor of all quadrics that are squares modulo the Pl\"ucker relation.
These are $\left( \begin{smallmatrix} 21 \\ 3 \end{smallmatrix} \right)$ equations of degree 3 in the 21 unknowns $\bs{c}$, which are in fact just $20= \dim \mathbb{C}[\Gr(1,\PP^3)]_2$ unknowns due to the Pl\"ucker relation.
\hfill $\diamondsuit$
\end{ex}

\begin{ex}
Consider the Cayley variety $\mathcal{C}(1,3,\PP^4)$ of cubic coisotropic forms in $\Gr(1,\PP^4)$. We have the ten equations in~\eqref{eq:diffCayleyG(2,5)} of degree 5 in the 10 variables $q_{ij}$. For each such equation, the condition~\eqref{eq:cayleyCondition} contains 2002 monomials. The quadric $F^{(2)}$ has 55 monomials and the cubics $G_{\alpha \beta \gamma \delta}^{(3)}$ have 220 monomials each. This leads to ten matrices with 2002 rows and $1156 = 1+55+5 \cdot 220$ columns. The first column of each matrix consists of quadratic forms in $\bs{c}$, the next 55 columns contain linear form in $\bs{c}$, and the remaining columns have only constants. The maximal minors of these matrices are thus $10 \cdot \left( \begin{smallmatrix} 2002 \\ 1156 \end{smallmatrix} \right)$ equations of degree 57 in the 220 unknowns $\bs{c}$, which are in fact just $175 = \dim \mathbb{C}[\Gr(1,\PP^4)]_3$ unknowns due to the Pl\"ucker relations. Hence, the computation of the vanishing ideal of the Cayley variety $\mathcal{C}(1,3,\PP^4)$ is a hard computational task.
\hfill $\diamondsuit$
\end{ex}

\section{Computations}
\label{sec:computations}

A \texttt{Macaulay2} package for calculating coisotropic hypersurfaces and recovering their underlying varieties can be obtained at
\begin{center}
  \url{page.math.tu-berlin.de/~kohn/packages/Coisotropy.m2}
\end{center}

To use the package, the user can simply start \texttt{Macaulay2} from the same directory where the file was saved and then use the command \texttt{loadPackage "Coisotropy"}. After that, the following commands are available:

\texttt{dualVariety I}: Computes the ideal of the projectively dual variety of the projective variety given by the ideal \texttt{I}.

\texttt{polarDegrees I}: Computes a list whose $i$-th entry is the degree of the $i$-th coisotropic hypersurface of the projective variety given by the ideal \texttt{I}. This is done by computing the multidegree of the conormal variety, as described in Section~\ref{sec:degree}.

\texttt{coisotropicForm (I,i)}: Returns the i-th coisotropic form in primal Pl\"ucker coordinates of the projective variety given by the ideal \texttt{I}. The computation of this form follows essentially Definition~\ref{defn:higher_associated}.

\texttt{isCoisotropic (Q,k,n)}: Checks if a hypersurface in $\Gr(\mathtt{k},\PP^\mathtt{n})$ is coisotropic. The hypersurface is given by a polynomial \texttt{Q} in primal Pl\"ucker coordinates. This is implemented by using the characterization of coisotropy in Proposition~\ref{prop:affine_characterization}.

\texttt{recoverVar (Q,k,n)}: Computes the ideal of the underlying projective variety of a coisotropic hypersurface in $\Gr(\mathtt{k},\PP^\mathtt{n})$, which is given by a polynomial \texttt{Q} in primal Pl\"ucker coordinates. This computation uses the Cayley trick in Proposition~\ref{prop:cayley_trick}.

\texttt{dualToPrimal (Q,k,n)}: Transforms the polynomial \texttt{Q} in dual Pl\"ucker coordinates of $\Gr(\mathtt{k},\PP^\mathtt{n})$ to a polynomial in primal Pl\"ucker coordinates. This can be used to perform the change of coordinates~\eqref{eq:plucker_change} before calling one of the above commands that require primal Pl\"ucker coordinates.

\texttt{primalToDual (Q,k,n)}: Reverse transformation to \texttt{dualToPrimal}.

\paragraph*{Acknowledgments.}
During this work I was supported by a Fellowship from the Einstein Foundation Berlin.
I would like to thank Bernd Sturmfels, Peter Bürgisser, Pierre Lairez, Ragni Piene and Paolo Tripoli for helpful discussions and comments.

\bibliographystyle{alpha}
\bibliography{literatur}

\end{document}